\theoremstyle{plain}
\newtheorem{theorem}{Theorem}[section]
\newtheorem{lemma}[theorem]{Lemma}
\newtheorem{corollary}[theorem]{\bf Corollary}
\newtheorem{remark}[theorem]{\bf Remark}
\newtheorem{proposition}[theorem]{\bf Proposition}
\newcommand \Sym{ \mbox{Sym}}
\newcommand \Appr{ \rm{Appr}}
\newcommand \Iso{{\rm{ Iso}}}
\newcommand \Th{{\rm{ Th}}}
\newcommand \FL{\mathcal{L}_{\omega \omega}}
\newcommand \IL{\mathcal{L}_{\omega_1 \omega}}
\newcommand \Mod{\rm{Mod}}
\newcommand \tp{\rm{tp}}
\newcommand \rk{{\rm{ rk}}}
\newcommand \bPi{\bm{\Pi}}
\newcommand \bSigma{\bm{\Sigma}}
\newcommand \KK{\mathcal{K}}
\newcommand \PP{\mathcal{P}}
\newcommand \VV{\mathcal{V}}
\newcommand \NN{\mathbb{N}}
\newcommand \Seq{\mathbb{N}^{<\mathbb{N}}}
\newcommand \RR{\mathbb{R}}
\newcommand \QQ{\mathbb{Q}}
\newcommand \UU{\mathbb{U}}
\newcommand \UUU{\mathcal{U}}
\newcommand{\cl}[2][]{\overline{#2}^{#1}}
\title{Isomorphism of locally compact Polish metric structures}
\author[M. Malicki]{Maciej Malicki}
\address{Institute of Mathematics, Polish Academy of Sciences, ul. Sniadeckich 8, Warsaw, Poland}
\email{mamalicki@gmail.com}
\keywords{equivalence relations, infinitary continuous logic, locally compact structures.}
\subjclass[2020]{Primary 03E15; Secondary 03C66, 03C75}
\begin{document}
\maketitle

\begin{abstract}
We study the isomorphism relation on Borel classes of locally compact Polish metric structures. We prove that isomorphism on such classes is always classifiable by countable structures (equivalently: Borel reducible to graph isomorphism), which implies, in particular, that isometry of locally compact Polish metric spaces is Borel reducible to graph isomorphism. We show that potentially $\bPi^0_{\alpha+1}$ isomorphism relations are Borel reducible to equality on hereditarily countable sets of rank $\alpha$, $\alpha \geq 2$. We also study approximations of the Hjorth-isomorphism game, and formulate a condition ruling out classifiability by countable structures.
\end{abstract}

\section{Introduction}

An equivalence relation $E$ on a Polish space $X$ is Borel reducible to an equivalence relation $F$ on a Polish space $Y$ if there is a Borel mapping $f:X \rightarrow Y$ such that $$x_1 E x_2 \mbox{ iff } f(x_1) F f(x_2).$$ The notion of Borel reducibility can be thought of as a general framework for measuring complexity of various notions of isomorphism. For instance, Ornstein's celebrated theorem says that isomorphism of Ber\-noulli shifts can be characterized by their entropy, i.e., by real numbers. This translates into the language of Borel reducibility as the statement that the isomorphism equivalence relation on the space of Bernoulli shifts is Borel reducible to the equality relation on $\RR$ (via an appropriate coding of Bernoulli shifts as elements of a Polish space, and a Borel mapping assigning to shifts their entropy). In other words, this relation is \emph{smooth}: it admits invariants that are elements of a Polish space. However, there are many interesting classification results that do not yield as concrete invariants. Halmos and von Neumann proved that isomorphism of measure preserving transformations with discrete spectrum is reducible to equality on countable subsets of the unit circle (via a mapping assigning to such transformations their spectrum). And Kechris showed that orbit equivalence relations induced by actions of locally compact Polish groups are Borel reducible to relations with countable classes, i.e., they are \emph{essentially countable}.

As a matter of fact, all these results have a common feature: they say that the involved equivalence relations are \emph{classifiable by countable structures}, i.e., they are Borel reducible to the isomorphism relation on a Borel class of countable structures in the sense of model theory. It should not be surprising that in this setting tools coming from logic play a vital role. It has been known for a long time that there are deep connections between model theory of the infinitary logic $\IL$ and descriptive set theory -- e.g., Scott analysis or the Lopez-Escobar theorem. And Hjorth's theory of turbulence, inspired by Scott analysis, is a prominent example of this phenomenon in the theory of Borel reducibility. Another important result, due to Hjorth and Kechris, characterizes in model-theoretical terms essential countability of isomorphism on Borel classes of countable structures: it is essentially countable iff there is a countable fragment $F$ of $\IL$ such that for every structure $M$ in the class there is a tuple $\bar{a}$ such that $\Th_F(M,\bar{a})$ is $\aleph_0$-categorical.

Very recently, successful attempts have been made at extending this approach to continuous logic. In this setting, Polish (i.e., separable and complete) metric structures play the role of countable structures. A continuous $\IL$ logic was first studiedb by Ben Yaacov and Iovino in \cite{BeIo}, and a continuous version of Scott analysis was developed in \cite{BeDoNiTs}, laying the foundations for descriptive set theoretic applications. And in \cite{HaMaTs}, Hjorth and Kechris' characterization of essential countability has been generalized to locally compact Polish metric structures, leading to a model-theoretic proof of Kechris' theorem on orbit equivalence relations induced by actions of locally compact Polish groups.

In the present paper, we continue this line of research. We show in Theorem \ref{th:isoTalpha} that isomorphism classes of locally compact Polish metric structures can be characterized by hereditarily countable sets built out of closed subsets of appropriate type spaces, and this implies that isomorphism on Borel classes of locally compact Polish metric structures is always classifiable by countable structures (Theorem \ref{th:CtbleModels}). In particular, we confirm a conjecture of Gao and Kechris from \cite{GaKe}, where they asked whether isometry of locally compact Polish metric spaces is Borel reducible to graph isomorphism (Theorem \ref{th:Isometry}). Next, we perform a fine-grained analysis of Borel isomorphism relations along the lines of \cite{HjKe}. 
Generalizing in Theorem \ref{th:Potentially} results from \cite{HjKeLo}, we prove that potentially $\bPi^0_{\alpha+2}$ isomorphism on a Borel class of locally compact Polish metric structures is Borel reducible to equality on hereditarily countable sets of rank $\alpha+1$, $\alpha \geq 1$. 

In the last part of the paper, we turn to equivalence relations that are \emph{not} classifiable by countable structures. Lupini and Panagiotopolous \cite{LuPa} recently developed a game-theoretic approach that (with an aid of Hjorth's theory of turbulence) gives an interesting sufficient condition for not being classifiable in this way. We introduce and study a hierarchy of games that are finer and finer approximations of the Hjorth-isomorphism game considered in \cite{LuPa}. We show that in the case of isomorphism of countable structures, these games capture information contained in families $Th^\alpha(M)$ (where $Th^0(M)$ is the theory of $M$, $Th^1(M)$ is the family of theories of structures $(M,\bar{a})$, etc). We also provide in Theorem \ref{th:NotClassifiable} a sufficient condition ruling out classifiability by countable structures, and, in Theorem \ref{th:NotClassifiablePialpha}, by countable structures with isomorphism of a given Borel complexity.

\section{Notation and basic facts}

In this section, we briefly discuss basics of infinitary continuous logic $\IL$. For a more detailed treatment, the reader is referred to \cite{BeDoNiTs} and \cite{HaMaTs}. A \emph{modulus of continuity} is a continuous function $\Delta \colon [0, \infty) \to [0, \infty)$ satisfying for all $r, s \in [0, \infty)$:
\begin{itemize}
	\item $\Delta(0)=0$,
	\item $\Delta(r) \leq \Delta(r+s) \leq \Delta(r)+\Delta(s)$.
\end{itemize}
Suppose that $\Delta$ is a modulus of continuity and that $(X, d_X)$ and $(Y,d_Y)$ are metric spaces. We say that a map $f: X \to Y$ \emph{respects} $\Delta$ if
\[ d_Y(f(x_1),f(x_2)) \leq \Delta(d_X(x_1, x_2)) \quad \text{ for all } x_1, x_2 \in X.\]

A \emph{signature} $L$ is a collection of predicate and function symbols and as is customary, we treat constants as $0$-ary functions. Throughout the paper, we assume that $L$ is countable. To each symbol $P$ are associated its \emph{arity} $n_P$ and its \emph{modulus of continuity} $\Delta_P$, and, if $P$ is a predicate, its \emph{bound}, i.e., a compact interval $I_P \subseteq \RR$. In a metric structure $M$ with \emph{complete} metric $d$, extended to finite or infinite tuples by putting, for $m,n \leq \omega$, $\bar{a} \in M^m$, $\bar{b} \in M^n$,
\begin{equation*}
	d(\bar a, \bar b) = \max \{d(a_i, b_i): i<\min(m,n) \},
\end{equation*}
predicate symbols are interpreted as real-valued functions of the appropriate arity respecting the modulus of continuity and the bound; similarly for function symbols. 

Terms and atomic formulas in infinitary continuous logic $\IL(L)$ in signature $L$ are defined in the usual way. Other formulas are built using:
\begin{itemize}
	\item Finitary connectives: if $\phi$ and $\psi$ are formulas and $r \in \QQ$, then $\phi + \psi$, $r \phi$, and $\phi \vee \psi$ are formulas. Here $\phi \vee \psi$ is interpreted as $\max(\phi, \psi)$, we also define $\phi \wedge \psi \coloneqq -(-\phi \vee -\psi) = \min(\phi, \psi)$, $\phi \dotdiv \psi=\max(\phi-\psi,0)$. The constant $1$ is a formula. 
	
	\item Quantifiers: if $\phi(x, \bar y)$ is a formula, then $\sup_x \phi$ and $\inf_x \phi$ are formulas.
	
	\item Infinitary connectives: if $\{\phi_n(\bar x) : n \in \NN\}$ are formulas with the same finite set of free variables $\bar x$ that \emph{respect a common continuity modulus and bound}, then $\bigvee_n \phi_n$ and $\bigwedge_n \phi_n$ are formulas. The symbol $\bigvee$ is interpreted as a countable supremum and $\bigwedge$ is interpreted as a countable infimum. The condition that we impose ensures that the interpretations of these formulas are still bounded, uniformly continuous functions.
\end{itemize}

The interpretations of formulas in a metric structure $M$ are defined in the usual way. It is important to keep in mind that to any formula are associated its modulus of continuity and bound that can be calculated from its constituents. If $\phi$ is a formula, we will denote by $\phi^M$ the interpretation of $\phi$ in $M$. A \emph{sentence} is a formula with no free variables, and a \emph{theory} is a collection of conditions of the form $\phi = c$, where $\phi$ is a sentence and $c \in \RR$;  throughout the paper, we will consider only countable theories. A condition $\phi = c$ is \emph{satisfied} in a structure $M$ if $\phi^M = c$. A structure $M$ is a \emph{model} of the theory $T$, denoted by $M \models T$, if all conditions in $T$ are satisfied in $M$.

Fix a signature $L$. A \emph{fragment} of $\IL(L)$ is a countable collection $F \subseteq \IL(L)$ that contains all atomic formulas and is closed under finitary connectives, quantifiers, taking subformulas, and substitution of terms for variables. The smallest fragment is \emph{the finitary fragment} $\FL(L)$ that contains no infinitary formulas. If $F$ is a fragment and $T$ is a theory, we will say that $T$ is an $F$-theory if all sentences that appear in $T$ are in $F$.

Fix a fragment $F$. For an $F$-theory $T$, $M \models T$, and $\bar a \in M^{n}$, the \emph{type of $\bar a$} (or $F$-type if $F$ is not clear from the context)), denoted by $\tp(\bar a)$ (or $\tp_F(\bar a)$), is defined as a collection of all conditions of the form $\phi(x_1,\ldots,x_n)=c$ such that $\phi \in F$, and $\phi^M(\bar{a})=c$ (we write $\phi(\tp(\bar{a}))=c$). An \emph{$n$-type} of $T$ is the type of an $n$-tuple $\bar{a}$ in $M$ such that $M \models T$ (note that this definition agrees with the definition of a realizable type from Section 2.2 in \cite{HaMaTs}). The set $S_{n}(T)$ is the set of all $n$-types of $T$, i.e.,
\begin{equation*}
	S_{n}(T) = \{\tp(\bar a) : M \models T, \bar a \in M^{n}\}.
\end{equation*}

For  $\phi \in F$ and $r \in \QQ$, put
\[	[\phi < r]=\{p \in S_{n}(T) : \phi(p)=c \in p \mbox{ for some } c<r\}, \]
and define $[\phi \leq r]$, $[\phi=r]$ analogously. Observe that every set  $[\phi \leq r]$ can be written as some $[\psi=0]$, and every $[\phi<r]$ can be written as a countable union of some $[\psi=0]$. The \emph{logic topology} $\tau_{n}$ on $S_n(T)$ is given by pointwise convergence on formulas, i.e., basic open sets are of the form $[\phi<r]$. By \cite[Proposition 3.7]{HaMaTs}, this topology is Polish.

An important feature of type spaces in continuous logic is that, in addition to the logic topology, they are also equipped with a metric, which, in general, induces a finer topology. By \cite[Proposition 2.6]{HaMaTs}, it can be defined by
\begin{equation*}
	\partial(p, q) = \sup_{\phi \in F_1} |\phi(p) - \phi(q)|,
\end{equation*}
where $F_1$ is the collection of all $1$-Lipschitz formulas in $F$.

A type $p \in S_n(T)$ is \emph{isolated} if $\tau$- and $\partial$- topologies coincide on some neighborhood of $p$. A model $M$ of a theory $T$ is \emph{atomic} if all the types realized in $M$ are isolated. And it is \emph{$\aleph_0$-categorical} if it is a unique Polish metric structure modeling $\Th_F(M)$.

The space $\Mod(L)$ of all Polish metric structures in signature $L$ is defined as in \cite{BeDoNiTs}. Because functions can be easily coded as predicates (see Section 4 in \cite{BeDoNiTs}), we can assume that $L$ is a relational signature. Enumerate all predicates in $L$ as $P_0=d, P_1,\ldots$,  and let $n_0, n_1, \ldots$ be their
respective arities. Let $\Mod(L)$ be the set of all $\xi \in \prod_i \RR^{\NN^{n_i}}$ such that there exists a metric structure $M_\xi$ and a tail-dense sequence $(a_i)$ of elements of $M$ such that
\[ \xi(i)(j_0,\ldots, j_{n_i-1})=P^M_i(a_{j_0}, \ldots, a_{j_{n_i-1}})   \]
for all $i\in \NN$, $(j_0,\ldots, j_{n_i-1}) \in \NN^{n_i}$. Observe that $M_\xi$ can be obtained from $\xi$ by completing the pseudometric $P_0$ on $\NN$ (coded by $\xi(0) \in \RR^{\NN^2}$), extending predicates $P_i$ (coded by $\xi(i)$, $i>0$) to the completion, and taking the quotient with respect to the pseudometric. In other words, we can think of elements $\xi \in \Mod(L)$ as of Polish metric structures with a distinguished tail-dense sequence $(a_i)$. In particular, tuples in $M_\xi$ consisting of elements from this sequence can be unequivocally referred to as tuples from $\Seq$. Slightly abusing notation, we will often identify $\xi$ and $M_\xi$.

Beside the standard product topology on $\Mod(L)$, one can consider finer topologies generated by fragments, analogously to topologies generated by fragments in the setting of classical countable models (see Section 11 in \cite{Gao} for details). For a fragment $F$, a basis for the topology $t_F$ is given by sets of the form $$[\phi(\bar{a})<r]=\{M \in \Mod(L): \phi^M(\bar{a})<r\},$$ where $\phi \in F$, $\bar{a} \in \Seq$, and $r \in \QQ$. Note that the standard topology can be regarded as the topology generated by sets $[\phi(\bar{a}) < r]$, where $\phi$ is finitary and quantifier-free. 

For a theory $T$, the space $\Mod(T) \subseteq \Mod(L)$ is the space of all Polish metric structures modeling $T$. By \cite[Lemma 3.5]{HaMaTs}, topologies $t_F$ are Polish on $\Mod(T)$, provided that $F$ contains all sentences in $T$. The symbol $\cong_T$ denotes the isomorphism relation on $\Mod(T)$, and $[M]$ is the isomorphism class of $M \in \Mod(T)$. We say that $\cong_T$ is potentially $\bPi^0_\alpha$ (where $\bPi^0_\alpha$ refers to Borel sets of multiplicative rank $\alpha$) if there is a Polish topology $t$ on $\Mod(T)$, consisting of Borel sets in the standard product topology on $\Mod(L)$, such that $\cong_T \in \bPi^0_\alpha(t \times t)$.

For a metric space $(X,d)$, denote balls in $X$ by
\[ B^X_r(a)=\{b \in X: d(a,b)<r \}, \, B^X_{\leq r}(a)=\{b \in X: d(a,b) \leq r \}; \]
if $X$ is clear from the context, we will write $B_r(a)$ and $B_{\leq r}(a)$. We will also consider balls in $X^\NN$ and $X^{<\omega}$ around finite tuples, using the extension of $d$ defined above; in particular $B^{X^\NN}_r(\emptyset)=X^\NN$ and $B^{X^{<\omega}}_r(\emptyset)=X^{<\omega}$.

For a Polish metric structure $M \in \Mod(L)$, let $$D(M)=\{ (y_i) \in M^\NN: \{y_i\} \mbox{ is tail-dense in } M  \}.$$ $D(M)$ is clearly a $G_\delta$ set in $M^\NN$, and therefore a Polish space. Denote by $\pi \colon D(M) \to [M]$ the map given by
\begin{equation}
	\label{eq:def-pi}
	P_i^{\pi(y)}(\bar{a}) = P_i^{M}(y(a_0), \ldots, y(a_{n_i-1}))
\end{equation}
for $y \in D(M)$, $\bar{a} \in \NN^{n_i}$. It is surjective, and continuous with respect to topologies generated by fragments. More importantly, $\pi$ plays the role of the logic action in the context of countable structures. For $A \subseteq \Mod(L)$, $\bar{a} \in \Seq$, and $u \in \QQ^+$, define $A^{* \bar{a},u}$ by
\[ M \in A^{* \bar{a},u} \Leftrightarrow  \forall^* y \in B^{D(M)}_{u}(\bar{a}) (\pi(y) \in A), \]
and $A^*=A^{* \emptyset,0}$; $A^{\Delta \bar{a},u}$ and $A^\Delta$ are defined similarly. The operations $A^{* \bar{a},u}$ are analogs of Vaught transforms. By \cite[Theorem 6.3]{BeDoNiTs}, which is a continuous counterpart of the Lopez-Escobar theorem, $[M]$ is Borel for $M \in \Mod(L)$, and every isomorphism-invariant Borel $A \subseteq \Mod(L)$ is of the form $\Mod(T)$ for some theory $T$.

\section{AE families}

Using Vaught transforms, one can characterize isomorphism classes of countable structures in terms of satisfiability of appropriately chosen formulas. For example, if $[M]$ is $\bPi^0_2$, there are formulas $\phi_{k,l}$, $k,l \in \NN$, such that  $N \in [M]$ iff
\[  \forall  \bar{b} \forall k  \exists \bar{c} \supseteq \bar{b} \exists l  (N \models \phi_{k,l}(\bar{c}))  \]
(see the proof of \cite[Theorem 11.5.7]{Gao} for details). It turns out that this approach can be generalized to higher Borel complexity, and to the continuous setting. As it will turn out in the next section, it allows for taking advantage of type spaces in defining nicely behaving invariants of isomorphism for classes of locally compact structures.  

For a fixed (countable) fragment $F$ in signature $L$, $\alpha<\omega_1$, and a tuple $\bar{x}$ of free variables, an \emph{$\alpha$-AE family} $P(\bar{x})$ is defined as follows. An $(-1)$-family $P(\bar{x})$ is a formula $\phi(\bar{x})$ in $F$. Provided that $\gamma$-AE families have been defined for $\gamma<\beta$, where $\beta=0$ or $\beta$ is a limit ordinal, a $\beta$-AE family $P(\bar{x})$ is a collection of $\gamma$-AE families $p_k(\bar{x})$, $k \in \NN$, $\gamma<\beta$, a $(\beta+1)$-AE family $P(\bar{x})$ is a collection of $\gamma$-AE families $p_{k,l}(\bar{x}_{k,l})$, $\gamma<\beta$, $k,l \in \NN$, $\bar{x} \subseteq \bar{x}_{k,l}$, and a $(\beta+n)$-AE family $P(\bar{x})$, $2 \leq n < \omega$, is a collection of $(\beta+n-2)$-AE families $p_{k,l}(\bar{x}_{k,l})$, $k,l \in \NN$, $\bar{x} \subseteq \bar{x}_{k,l}$. Moreover, every $\alpha$-AE family $P(\bar{x})=\{p_{k,l}(\bar{x}_{k,l})\}$, $\alpha \geq 1$, comes equipped with a fixed $u_P \geq 0$ such that $u_P \geq u_{p_{k,l}}$, $k,l \in \NN$.

We say that a tuple $\bar{a}$ in $M \in \Mod(L)$ realizes a $(-1)$-AE family $P(\bar{x})=\phi(\bar{a})$ if $\phi^M(\bar{a})=0$, and $\bar{a}$ realizes a $\beta$-AE family $P(\bar{x})$, where $\beta=0$ or $\beta$ is a limit ordinal, if it realizes every $p(\bar{x}) \in P(\bar{x})$. Finally, $\bar{a}$ realizes a $(\beta+n)$-AE family $P(\bar{x})=\{p_{k,l}(\bar{x}_{k,l})\}$, $1 \leq n< \omega$, if it holds in $M$ that 
\[ \forall \bar{b} \in B^{M^{<\omega}}_{u_P}(\bar{a}) \forall v>0 \forall k \exists \bar{c} \in B^{M^{<\omega}}_{v}(\bar{b}) \exists l  \, (\bar{c} \mbox{ realizes } p_{k,l}(\bar{x}_{k,l}) \mbox{ in } M ). \]
If $\emptyset$ in $M$ realizes $P(\emptyset)$, we say that $M$ models $P$.

\begin{remark}
\label{re:Smallv}
Note that in order to verify that $\bar{a}$ realizes $P(\bar{x})$, it suffices to check that the above condition holds for $\bar{b}, \bar{c} \in \Seq$ with $|\bar{b}|\geq |\bar{a}|$, $|\bar{c}|\geq |\bar{b}|$, and for all sufficiently small $v>0$. 
\end{remark}

\begin{theorem}
\label{th:AEfamilies}
Let $F$ be fragment in signature $L$, and let $1 \leq \alpha<\omega_1$. Suppose that $A \in \bPi^0_{1+\alpha}(t_F)$ for some $A \subseteq \Mod(L)$. For every $\bar{a} \in \Seq$, and $u \in \QQ^+$, there exists an $\alpha$-AE family $P(\bar{x})$ such that
\[ A^{*\bar{a},u }=\{ N \in \Mod(L): \bar{a} \mbox{ realizes } P(\bar{x}) \mbox{ in } N \}. \]
\end{theorem}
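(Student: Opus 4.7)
The plan is to argue by transfinite induction on $\alpha \geq 1$, carrying $\bar{a}$ and $u$ as parameters throughout each step.

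For the base case $\alpha = 1$, I use that $A \in \bPi^0_2(t_F)$ decomposes as $A = \bigcap_k U_k$ with each $U_k$ a $t_F$-open set; each basic $t_F$-open $[\phi < r]$ is a countable union of zero sets (using $[\phi \dotdiv r' = 0]$ for rationals $r' < r$), so $A = \bigcap_k \bigcup_l [\phi_{k,l}(\bar{b}_{k,l}) = 0]$. I take the $1$-AE family $P(\bar{x}) = \{\phi_{k,l}(\bar{x}_{k,l})\}$ with $u_P = u$, where each $\bar{x}_{k,l}$ extends $\bar{x}$ to cover the fresh indices in $\bar{b}_{k,l}$. For the equivalence, $M \in A^{*\bar{a},u}$ iff each $\pi^{-1}(U_k)$ is comeager in $B_u^{D(M)}(\bar{a})$; since $U_k$ is $t_F$-open and $\pi$ is $t_F$-continuous, $\pi^{-1}(U_k)$ is open, so comeager collapses to dense, and translating density in $D(M)$ back to tuples in $M$ via Remark \ref{re:Smallv} yields precisely the $1$-AE realization condition.

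For the inductive step with $A \in \bPi^0_{1+\alpha}$, I decompose $A = \bigcap_k V_k$ with $V_k \in \bSigma^0_\beta$ for a suitable $\beta < 1 + \alpha$, and $V_k = \bigcup_l A_{k,l}$ with $A_{k,l} \in \bPi^0_{1 + \gamma_{k,l}}$ for $\gamma_{k,l} < \alpha$. The central equivalence I aim to establish is
\[ M \in A^{*\bar{a},u} \iff \forall k\, \forall \bar{b} \in B_u^{M^{<\omega}}(\bar{a})\, \forall v > 0\, \exists l\, \exists \bar{c} \in B_v^{M^{<\omega}}(\bar{b})\, \exists w \in \QQ^+ \cap (0,v)\, (M \in A_{k,l}^{*\bar{c},w}). \]
This follows from the Baire property of Borel sets: ``$\pi^{-1}(A)$ is comeager on $B_u^{D(M)}(\bar{a})$'' is equivalent to each $\pi^{-1}(V_k)$ being non-meager on every open sub-ball, which by the $\bigcup_l$ form reduces to some $\pi^{-1}(A_{k,l})$ being non-meager there, which (by BP of $A_{k,l}$) is in turn equivalent to $A_{k,l}$ being comeager on some smaller sub-sub-ball $B_w^{D(M)}(\bar{c}) \subseteq B_v^{D(M)}(\bar{b})$. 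The reverse direction is the contrapositive of this chain.

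By the inductive hypothesis, for each $w$ and each target variable length, $A_{k,l}^{*\bar{c},w}$ is characterized by a $\gamma_{k,l}$-AE family $q_{k,l,w}$ with $u_{q} = w$, depending on $\bar{c}$ only through the reference tuple for the ball. Since $\QQ^+$ (and the variable lengths) are countable, I absorb $w$ into a reindexing $(l, w) \mapsto l'$ and set the sub-families of $P$ to be $p_{k,l'} = q_{k,l(l'),w(l')}$ with $u_{p_{k,l'}} = w(l') \leq u$. The rank $\alpha$ comes out correctly because the definition allows sub-families of rank $\gamma < \beta$ when $\alpha = \beta + 1$ and of rank $\beta + n - 2$ when $\alpha = \beta + n$ with $n \geq 2$; I choose the Borel decomposition to place the $\gamma_{k,l}$ in the required slot, lifting lower-rank sub-families when necessary. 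Limit $\alpha$ invokes the limit clause (collection of lower-rank families on the same variables), obviating the outermost $\forall \bar{b}\, \forall v$ structure.

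The main obstacle is aligning Borel rank with AE rank: the AE definition is asymmetric, adding a new $\forall \exists$ alternation across $+1$ moves out of a limit (or $0$) but only across alternate $+2$ moves elsewhere, so the decomposition of $A$ must be selected to place components at exactly the right Borel rank. Closed components at the bottom of the recursion (which fall outside the induction's range $\alpha \geq 1$) are handled directly by $0$-AE families built from their defining formulas, exploiting continuity to translate ``a small ball is contained in a zero set'' into an AE realization condition. Finally, the consolidation over $(\bar{c},w)$ relies on the inductive hypothesis yielding AE families that are uniform in $\bar{c}$ modulo the reference tuple, a feature visibly preserved by the construction in the base case and propagated by the inductive step.
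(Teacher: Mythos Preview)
Your proposal is correct and follows essentially the same approach as the paper: transfinite induction on $\alpha$, using the standard Vaught-transform calculus to convert the comeager condition $A^{*\bar a,u}$ into the nested $\forall\bar b\,\forall v\,\forall k\,\exists\bar c\,\exists l$ form, then invoking the inductive hypothesis (or, at the bottom, the zero-set description of closed $t_F$-sets) on the inner pieces, and absorbing the auxiliary radius $w$ into a reindexing of $l$. The only cosmetic differences are that the paper phrases the base case via ``closed $A_{k,l}$ comeager on a sub-ball'' rather than your equivalent ``open $\pi^{-1}(U_k)$ comeager iff dense,'' and it makes the rank-alignment explicit by treating $\alpha=1,2$ and the cases $n=0,1,2$ separately instead of your more compressed ``choose the decomposition to hit the required slot.''
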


\begin{proof}
	We prove the theorem by induction on $\alpha$. Let us consider the base cases $\alpha=1$, and $\alpha=2$. Fix  $A \subseteq \Mod(L)$ such that $A \in \bPi^0_{2}(t_F)$, $\bar{a} \in \Seq$, $u \in \QQ^+$, and closed sets $A_{k,l}$, $k,l \in \NN$, in $t_F$ of the form $[\phi=0]$ such that
	\[ A=\bigcap_k \bigcup_l A_{k,l}.\]
	Then
	\[ N \in A^{*\bar{a}, u} \Leftrightarrow \forall^* y \in B^{D(N)}_{u}(\bar{a})  \forall k \exists l \, (\pi(y) \in A_{k,l}) \Leftrightarrow \]
	\[   \forall \bar{b} \in B^{N^{<\omega}}_{u}(\bar{a}) \forall v>0 \forall k  \exists^* y \in B^{D(N)}_{v}(\bar{b}) \exists l  \, (\pi(y) \in A_{k,l}) \Leftrightarrow  \]
	\[   \forall \bar{b} \in B^{N^{<\omega}}_{u}(\bar{a}) \forall v>0  \forall k  \exists \bar{c} \in B^{N^{<\omega}}_{v}(\bar{b}) \exists l \exists w>0 \forall^* y \in B^{D(N)}_{w}(\bar{c}) \, (\pi(\bar{y}) \in A_{k,l}) \Leftrightarrow  \]
	\[   \forall \bar{b} \in B^{N^{<\omega}}_{u}(\bar{a}) \forall v>0  \forall k  \exists \bar{c} \in B^{N^{<\omega}}_{v}(\bar{b}) \exists l \exists w>0 \, (N \in A^{* \bar{c}, w}_{k,l}). \]
	Since $A_{k,l}$ are closed, we have that
	%
	%
	\[ N \in A^{*\bar{c}, w}_{k,l} \Leftrightarrow \neg \exists^* y \in B^{D(N)}_{w}(\bar{c}) \, ( \pi(y) \not \in A_{k,l}) \Leftrightarrow  \]
	\[ \neg \exists y \in B^{D(N)}_{w}(\bar{c}) \, ( \pi(y) \not \in A_{k,l}) \Leftrightarrow \neg \exists \bar{d} \, (d^N(\bar{c},\bar{d})<w \mbox{ and } \bar{d} \not \in A_{k,l}).  \]
	Moreover, $A_{k,l}$ are of the form $[\phi=0]$, so there are formulas $\phi_{k,l}(\bar{x}_{k,l})$ in $F$ such that
	\[ N \in A^{*\bar{a}, u} \Leftrightarrow \forall \bar{b} \in B^{N^{<\omega}}_{u}(\bar{a}) \forall v>0  \forall k  \exists \bar{c} \in B^{N^{<\omega}}_v(\bar{b}) \exists l \, (\phi^N_{k,l}(\bar{c})=0) \]
	Put $P(\bar{x})=\{\phi_{k,l}(\bar{x}_{k,l})\}$, and $u_P=u$. It is a $1$-AE family, and
	\[ N \in A^{*\bar{a}, u}  \Leftrightarrow \bar{a} \mbox{ realizes } P(\bar{x}) \mbox{ in } N. \]

	For $\alpha=2$, fix $A \in \bPi^0_3(t_F)$, a tuple $\bar{a}$, and closed in $t_F$ sets $A_{k,l,m}$ of the form $[\phi=0]$  such that
	\[ A=\bigcap_k \bigcup_l \bigcap_m A_{k,l,m}.\]
	Then
	\[ N \in A^{*\bar{a}, u } \Leftrightarrow \forall^* y \in B^{D(N)}_{u}(\bar{a}) g \forall k \exists l \forall m \, (\pi(y) \in A_{k,l,m}) \Leftrightarrow \]
	\[ \forall \bar{b} \in B^{N^{<\omega}}_{u}(\bar{a}) \forall v>0 \forall k \exists^* y \in B^{D(N)}_{v}(\bar{b}) \exists l \forall m  \, (\pi(y) \in A_{k,l,m}) \Leftrightarrow  \]
	\[ \forall \bar{b} \in B^{N^{<\omega}}_{u}(\bar{a}) \forall v>0 \forall k \exists \bar{c} \in B^{N^{<\omega}}_{v}(\bar{b}) \exists w>0 \exists l \forall^* y \in B^{D(N)}_{w}(\bar{c})  \forall m  \, (\pi(y) \in A_{k,l,m}).  \]
	%
	Since $A_{k,l,m}$ are closed, the condition $\forall^* y \in B^{D(N)}_{w}(\bar{c}) \forall m \, (\pi(y) \in A_{k,l,m})$ is also closed for every $w$, $\bar{c}$, $k$, and $l$. Therefore 
	%
	%
	there are formulas $\phi_{k,l,m}(\bar{x}_{k,l,m})$ in $F$ such that the above is equivalent to
	\[ \forall \bar{b} \in B^{N^{<\omega}}_{u}(\bar{a})  \forall v>0 \forall k \exists \bar{c} \in B^{N^{<\omega}}_{v}(\bar{b}) \exists l \forall m \, (\phi^N_{k,l,m}(\bar{c})=0). \]
	In other words, for $p_{k,l}(\bar{x}_{k,l})=\{ \phi_{k,l,m}(\bar{x}_{k,l}): m \in \NN \}$, and the $2$-AE family $P(\bar{x})=\{p_{k,l}(\bar{x}_{k,l})\}$, $u_P=u$, we have that 
	\[ N \in A^{*\bar{a}, u} \Leftrightarrow \bar{a} \mbox{ realizes } P(\bar{x}) \mbox{ in } N. \]
	
	Suppose $\alpha>2$, and write $\alpha=\beta+n$, where $\beta$ is $0$ or a limit ordinal, and $n<\omega$. For $n=0$, this is straightforward. For $n=1$ or $n>2$, exactly the same argument as for $\alpha=1$ works, only we use the inductive assumption to deal with $A^{*\bar{c}, w}_{k,l}$. And for $n=2$, we repeat the argument for $\alpha=2$, again using the inductive assumption.
\end{proof}

In particular, since $[M]=[M]^*$, we get

\begin{corollary}
\label{co:IsoAEAlpha}
Let $F$ be fragment in signature $L$, and let $1 \leq \alpha<\omega_1$. Suppose that $[M] \in \bPi^0_{1+\alpha}(t_F)$ for some $M \in \Mod(L)$. There exists an $\alpha$-AE family $P_M$ such that
\[  [M]= \{ N \in \Mod(L) : N \mbox{ models } P_M \}. \]
\end{corollary}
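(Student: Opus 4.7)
The plan is to read this off directly from Theorem \ref{th:AEfamilies} applied to $A = [M]$, with the distinguished parameters $\bar{a} = \emptyset$ and $u = 0$. Since the hypothesis says $[M] \in \bPi^0_{1+\alpha}(t_F)$, Theorem \ref{th:AEfamilies} supplies an $\alpha$-AE family $P_M(\emptyset)$ such that
\[ [M]^{*\emptyset, 0} = \{ N \in \Mod(L) : \emptyset \text{ realizes } P_M(\emptyset) \text{ in } N \}. \]
By the definitions laid out in the previous section, $[M]^{*\emptyset, 0} = [M]^*$, and the right-hand side is by definition $\{N \in \Mod(L) : N \text{ models } P_M\}$. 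So everything reduces to the parenthetical remark in the statement: verifying that $[M] = [M]^*$.

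For this, I would argue directly from the definition of $\pi$ in (\ref{eq:def-pi}). If $N \in [M]$, then for every $y \in D(N)$, the structure $\pi(y)$ is obtained by reindexing $N$ along the tail-dense sequence $y$; since $N$ is isomorphic to $M$ and $\pi(y)$ is isometrically isomorphic to $N$ as a metric structure, we get $\pi(y) \in [M]$ for \emph{every} $y \in D(N)$, and hence a fortiori $\forall^* y \in D(N)\, (\pi(y) \in [M])$, i.e.\ $N \in [M]^*$. Conversely, if $N \in [M]^*$, then in particular there exists some $y \in D(N)$ with $\pi(y) \in [M]$, and since $\pi(y) \cong N$ we conclude $N \in [M]$. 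So the two sets coincide, and combining this with the previous paragraph yields the corollary.

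There is essentially no obstacle here; the content is entirely in Theorem \ref{th:AEfamilies}, and this corollary just records the special case $\bar{a} = \emptyset$, $u = 0$, $A = [M]$, translated through the cosmetic identity $[M] = [M]^*$ that comes from the isomorphism-invariance of $[M]$ together with the fact that $\pi$ maps $D(N)$ into the isomorphism class of $N$.
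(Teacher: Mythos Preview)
Your proof is correct and follows exactly the paper's approach: the paper derives the corollary in one line from Theorem~\ref{th:AEfamilies} via the identity $[M]=[M]^*$, and you have simply spelled out that identity in more detail. The only cosmetic point is that Theorem~\ref{th:AEfamilies} is stated for $u\in\QQ^+$, so one may prefer to take any $u>0$ rather than $u=0$; since $B^{D(N)}_u(\emptyset)=D(N)$ for every $u$, this makes no difference.
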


\section{Locally compact structures}
AE families are hereditarily countable objects that characterize isomorphism classes of Polish metric structures. Unfortunately (although unsurprisingly), it is not always possible to assign them to structures in a definable (i.e., Borel) and isomorphism-invariant manner. However, the case of locally compact structures is simpler. Let us start with basic facts about type spaces of theories with locally compact models.

For a fragment $F$, $F$-theory $T$, locally compact $M \in \Mod(T)$, $n \in \NN$, and $n$-tuple $\bar{a}$ in $M$, let 
\[ \Theta_n(M)=\{\tp_F(\bar{b}): \bar{b} \in M^n\},\]
\[	\rho_M(\bar{a}) = \sup \{r \in \RR : \cl{B^{M^n}_r(\bar{a})} \text{ is compact} \},\]
or simply $\rho(\bar{a})$, when $M$ is clear from the context.

\begin{lemma}[Lemma 6.2 in \cite{HaMaTs}]
	\label{l:loc-cpct-type-space}
	Let $\Phi \colon (M^n, d) \to (S_n(T), \partial)$ be defined by $\Phi(\bar a) = \tp (\bar a)$. Then the following hold:
	\begin{enumerate}
		\item \label{i:p:loc-cpct-type-space:1} $\Phi$ is a contraction for the metrics $d$ on $M^n$ and $\partial$ on $S_n(T)$.
		\item \label{i:p:loc-cpct-type-space:2} If $r < \rho(\bar a)$, then $\Phi(B_{\leq r}(\bar a)) = B_{\leq r}(\Phi(a))$. In particular, \\ $B_{\leq r}(\tp (\bar a)) \subseteq \Theta_n(M)$, $B_{\leq r}(\tp (\bar a))$ is $\partial$-compact, and $\tau_n$- and $\partial$-topology coincide on $B_{\leq r}(\tp (\bar a))$.
		\item \label{i:p:loc-cpct-type-space:3} If $r \leq \rho(\bar a)$, then $\Phi(B_r(\bar a)) = B_r(\Phi(\bar a))$. In particular, $\Phi$ is an open mapping.
		\item \label{i:p:loc-cpct-type-space:4} The set $\Theta_n(M)$ is open in $(S_n(T), \partial)$ and the space $(\Theta_n(M), \partial)$ is locally compact and separable. 
	\end{enumerate}
\end{lemma}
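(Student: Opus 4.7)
The four items will be proved in order, with the substance concentrated in (2); items (3) and (4) then follow quickly. Item (1) is immediate: every $\phi \in F_1$ satisfies $|\phi^M(\bar a) - \phi^M(\bar b)| \leq d(\bar a, \bar b)$ by the definition of $1$-Lipschitz, so taking the supremum over $F_1$ gives $\partial(\tp(\bar a), \tp(\bar b)) \leq d(\bar a, \bar b)$.

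The heart of the argument is the reverse inclusion in (2): every $q$ with $\partial(q, \tp(\bar a)) \leq r < \rho(\bar a)$ must be realized by some $\bar b \in B_{\leq r}(\bar a)$ inside $M$ itself. The plan is to combine the inf-characterization $\partial(p, q) = \inf\{d(\bar a', \bar b') : \bar a' \models p, \bar b' \models q \text{ in a common model of } T\}$ with $F$-elementarity of an extension $M \prec K$ and with local compactness of $M$. Fix $r'' \in (\partial(q, \tp(\bar a)), \rho(\bar a))$. For each finite $\Sigma = \{\phi_i = v_i\}_{i \leq k} \subseteq q$ and each $\delta > 0$, consider the $F$-formula
\begin{equation*}
\psi_\Sigma(\bar x) = \max\!\bigl( \max_i |\phi_i(\bar x) - v_i|,\; d(\bar a, \bar x) \dotdiv r'' \bigr).
\end{equation*}
In a sufficiently saturated $F$-elementary extension $K \succeq M$, the partial type asserting $q$ together with $d(\bar a, \bar x) \leq r''$ is realized (consistency being the content of the inf-characterization of $\partial$), so $\inf_{\bar x} \psi_\Sigma^K = 0$; by $F$-elementarity $\inf_{\bar x} \psi_\Sigma^M = 0$ as well, producing $\bar c_{\Sigma, \delta} \in M^n$ with $\psi_\Sigma^M(\bar c_{\Sigma, \delta}) < \delta$. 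In particular $\bar c_{\Sigma, \delta}$ lies in the compact set $\overline{B_{r'' + \delta}(\bar a)}$, so the net $(\bar c_{\Sigma, \delta})$, indexed by $\Sigma$ under inclusion and $\delta \to 0$, has a convergent subnet with limit $\bar b \in \overline{B_{r''}(\bar a)}$ satisfying every condition of $q$; hence $\tp(\bar b) = q$. Letting $r'' \downarrow \partial(q, \tp(\bar a))$ and passing to a further subnet in $\overline{B_r(\bar a)}$ yields $\bar b \in B_{\leq r}(\bar a)$ with $\tp(\bar b) = q$. The remaining auxiliary assertions in (2) follow at once: $B_{\leq r}(\tp(\bar a)) = \Phi(\overline{B_r(\bar a)})$ is the continuous image of a compact set, so it is $\partial$-compact and contained in $\Theta_n(M)$; and since $\partial$ refines the Hausdorff topology $\tau_n$ (each $\phi \in F$ being uniformly $\partial$-continuous on type spaces), the two topologies must coincide on this compact set.

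For (3), given $q \in B_r(\Phi(\bar a))$, choose $s$ with $\partial(q, \tp(\bar a)) < s < r$ and apply (2) at radius $s$ to obtain $\bar b \in B_{\leq s}(\bar a) \subseteq B_r(\bar a)$ with $\tp(\bar b) = q$; openness of $\Phi$ then follows because the balls $B_r(\bar a)$ with $r \leq \rho(\bar a)$ form a neighborhood basis at $\bar a$. For (4), by (2) each $\tp(\bar a) \in \Theta_n(M)$ has a $\partial$-compact neighborhood inside $\Theta_n(M)$, so $\Theta_n(M)$ is open and locally compact in $(S_n(T), \partial)$; separability is inherited from $(M^n, d)$ via the continuous surjection $\Phi$.

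The main obstacle will be the realization step in (2): translating the abstract inequality $\partial(q, \tp(\bar a)) \leq r$ into an honest realization of $q$ inside $M$ itself. It rests on the inf-characterization of $\partial$, which in the $\IL$-setting has to be checked carefully against the $1$-Lipschitz infinitary formulas in $F_1$, and on local compactness of $M$, which keeps the approximating net $(\bar c_{\Sigma, \delta})$ from escaping and permits the passage to a limit realizing $q$.
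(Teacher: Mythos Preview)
The present paper does not prove this lemma; it is quoted from \cite{HaMaTs} (as the header ``Lemma 6.2 in \cite{HaMaTs}'' indicates) and used as a black box.  There is therefore no proof here to compare your proposal against.

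Your plan is nonetheless the standard one and is essentially correct.  Two small comments on the execution of~(2).  First, the detour through a saturated $F$-elementary extension is unnecessary and, as written, slightly circular: the inf-characterization of $\partial$ produces realizations of $\tp(\bar a)$ and $q$ at distance $<r''$ in \emph{some} model of $T$, not a priori in an extension of $M$, so ``consistency'' of $q(\bar x)\cup\{d(\bar a,\bar x)\leq r''\}$ over $M$ is not immediate.  The clean fix is to bypass $K$ entirely: the $F$-formula $\theta_\Sigma(\bar y)=\inf_{\bar x}\psi_\Sigma(\bar y,\bar x)$ takes value $0$ at the type $\tp(\bar a)$ (witnessed in that other model), and since the value of an $F$-formula at a type is model-independent, $\theta_\Sigma^M(\bar a)=0$ directly, which already gives your approximants $\bar c_{\Sigma,\delta}$ in $M$.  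Second, the real constants $v_i$ in $|\phi_i(\bar x)-v_i|$ need not be rational, so $\psi_\Sigma$ may not literally lie in $F$; this is harmless and is handled by rational approximation.  With these adjustments your argument for (2) goes through, and (3)--(4) follow as you say.
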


For each topology $\tau_n$ fix a countable basis $\UUU_n=\{U_{l,n}\}$ containing $\emptyset$ and the whole space, and put $\UUU=\bigcup_n \UUU_n$. For $U \in \UUU_n$, $\epsilon>0$, and $n$-tuple $\bar{a}$ in $M$, we say that $(U,\epsilon)$ is \emph{$\bar{a}$-good} in $M$ if
\begin{itemize}
	\item $\tp(\bar{a}) \in U$,
	\item $2\epsilon<\rho(\bar{a})$,
	\item there is $\delta>0$ such that $U \cap B_{2\epsilon}(\tp(\bar{a})) \subseteq B_{\epsilon-\delta}(\tp(\bar{a}))$. 
\end{itemize}

\begin{remark}
\label{re:good}
The following observations easily follow from the fact that $\partial$- and $\tau$- topologies coincide on compact subsets of $(S_n(T),\partial)$.
\begin{enumerate}
\item For every $\delta>0$ there exist $U \in \UUU$ and $0<\epsilon<\delta$ such that $(U,\epsilon)$ is $\bar{a}$-good,
\item if $(U,\epsilon)$ is $\bar{a}$-good, then $$\cl[\tau]{B_\epsilon(\tp(\bar{a})) \cap U } \subseteq \Theta_{|\bar{a}|}(M),$$
\item if $(U,\epsilon)$ is $\bar{a}$-good, there is $\delta>0$ such that $d(\bar{a},\bar{a}')<\delta$ implies that  $(U,\epsilon)$ is $\bar{a}'$-good, and $$U \cap B_{\epsilon}(\tp(\bar{a}))=U \cap B_{\epsilon}(\tp(\bar{a}')).$$
\end{enumerate}
\end{remark}
Now, for $\bar{a} \in \Seq$, $U \in \UUU_n$, and $\epsilon \in \QQ^+$, define
\[ T^0_{U,\epsilon}(\bar{a})=\cl[\tau]{B_\epsilon(\tp(\bar{a})) \cap U }, \]
if $(U,\epsilon)$ is $\bar{a}$-good,
\[ T^0_{U,\epsilon}(\bar{a})=\emptyset, \]
otherwise, and
\begin{multline}
T^{\alpha}_{U,\epsilon}(\bar{a})=\{ T^\beta_{U',\epsilon'}(\bar{a}'): \beta<\alpha, |\bar{a}'| \geq |\bar{a}|, U' \in \UUU_{|\bar{a}'|}, U' \upharpoonright |\bar{a}| \subseteq U,  \epsilon' \leq \epsilon \} 
\end{multline}
for $\alpha>0$. Also, for $u>0$, put
\begin{multline}
T^\alpha_u(\bar{a})=\{ T^\beta_{U,v}(\bar{b}): \beta<\alpha, \bar{b} \in B^{M^{<\omega}}_u(\bar{a}), |\bar{b}| \geq |\bar{a}|, U \in \UUU_{|\bar{b}|}, v>0 \}, 
\end{multline}

\[ T^\alpha(M)=T^\alpha_1(\emptyset).\]
As tuples in the definition of $T^{\alpha}_{U,\epsilon}(\bar{a})$ range over $\Seq$, $U$ range over a countable family, and $\epsilon \in \QQ^+$, $T^1(M)$ is a countable family of $\tau$-closed sets, $T^2(M)$ is a countable family of countable families of $\tau$-closed sets, etc. Moreover, using Remark \ref{re:good}(3), it is straightforward to observe that

\begin{remark}
\label{re:CongSameT}
$M \cong N$ implies that $T^\alpha(M)=T^\alpha(N)$ for all $\alpha \geq 1$.
\end{remark}

\begin{proposition}
\label{pr:AET}
Let $F$ be a fragment, and let $T$ be an $F$-theory. Suppose that $M,N \in Mod(T)$ are locally compact, and  $T^\alpha_{u}(\bar{a})=T^\alpha_{u'}(\bar{a}')$ for some tuples $\bar{a}$, $\bar{a}'$ in $M$, $N$, respectively. Then every $\alpha$-AE family $P(\bar{x})$ with $u_P \leq u$ realized by $\bar{a}'$, is also realized by $\bar{a}$. 
\end{proposition}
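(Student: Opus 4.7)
The plan is to prove the proposition by induction on $\alpha \geq 1$, exploiting the parallel recursive structures of $\alpha$-AE families and of the invariants $T^\alpha$. At each step, given $\bar b \in B^M_{u_P}(\bar a)$, the strategy has three phases: (i) transfer $\bar b$ to a matching tuple $\bar b'$ in $B^N_{u_P}(\bar a')$ via the equality $T^\alpha_u(\bar a) = T^\alpha_{u'}(\bar a')$; (ii) invoke realization of $P$ by $\bar a'$ to obtain a witness $\bar c' \in B^N_v(\bar b')$; and (iii) pull $\bar c'$ back to a witness $\bar c \in B^M_v(\bar b)$ via openness of the type map $\Phi$ (Lemma \ref{l:loc-cpct-type-space}(3)) in the locally compact $M$.

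For the base case $\alpha = 1$, write $P = \{\phi_{k,l}\}$ with $\phi_{k,l} \in F$ formulas. Fix $\bar b \in B^M_{u_P}(\bar a)$, $v > 0$, $k$; by Remark \ref{re:Smallv} we may take $|\bar b| \geq |\bar a|$ and $v$ small. Using Remark \ref{re:good}(1), pick a $\bar b$-good $(U, \epsilon)$ with $\epsilon \ll v$. Since $u_P \leq u$, we have $T^0_{U, \epsilon}(\bar b) \in T^1_u(\bar a) = T^1_{u'}(\bar a')$, producing $\bar b' \in B^{N^{<\omega}}_{u'}(\bar a')$ and a $\bar b'$-good $(U', \epsilon')$ with $T^0_{U', \epsilon'}(\bar b') = T^0_{U, \epsilon}(\bar b)$; consequently $\partial(\tp \bar b, \tp \bar b') \leq \min(\epsilon, \epsilon')$ is small. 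Using openness of $\Phi$ at $\bar a'$ in $N$, I would replace $\bar b'$ by a realization of $\tp \bar b'$ lying genuinely in $B^N_{u_P}(\bar a')$; applying realization of $P$ at $\bar a'$ with this adjusted $\bar b'$ yields $\bar c' \in B^N_{v/2}(\bar b')$ and $l$ with $\phi^N_{k,l}(\bar c') = 0$. Openness of $\Phi$ at $\bar b$ in the locally compact $M$ then gives $\bar c \in B^M_v(\bar b)$ with $\tp \bar c = \tp \bar c'$, hence $\phi^M_{k,l}(\bar c) = 0$.

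For $\alpha > 1$, write $\alpha = \beta + n$. When $n = 0$ (so $\alpha$ is $0$ or a limit), $P$ is a collection of lower-level AE families, each with bound $\leq u_P \leq u$, so the inductive hypothesis applied component-wise delivers realization of $P$ by $\bar a$ at once. When $n \geq 1$, phases (i)--(ii) repeat as above, and phase (iii) now asks $\bar c$ to realize the lower-level $p_{k,l}$ rather than a single formula. I would secure this by noting that for each $\beta' < \alpha$ the invariants $T^{\beta'}_{U'', v''}(\bar c')$ live in $T^\alpha_u(\bar a) = T^\alpha_{u'}(\bar a')$, which supplies a matching $\bar c$ in $M$; the inductive hypothesis applied to $\bar c, \bar c'$ with the matched lower-level invariant then gives realization of $p_{k,l}$.

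The main obstacle I expect is the adjustment step that places $\bar b'$ inside the smaller ball $B^N_{u_P}(\bar a')$: the raw equality $T^\alpha_u(\bar a) = T^\alpha_{u'}(\bar a')$ only guarantees $\bar b' \in B^N_{u'}(\bar a')$, so one must combine the $T^0$-match with openness of $\Phi$ at $\bar a'$ (Lemma \ref{l:loc-cpct-type-space}(3)) and local compactness to reposition $\bar b'$ without disturbing the matching of invariants -- and the same maneuver is required in pulling $\bar c'$ back to $\bar c$. Once this delicate point is resolved, the rest is a routine unwinding of the parallel recursive definitions.
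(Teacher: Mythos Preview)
Your inductive architecture matches the paper's, but phase (iii) has a genuine gap. You write that ``openness of $\Phi$ at $\bar b$ \ldots\ gives $\bar c \in B^M_v(\bar b)$ with $\tp \bar c = \tp \bar c'$.'' This would require $\tp(\bar c')$ and $\tp(\bar b)$ to live in the same type space $S_n(T)$, but in general $|\bar c'| > |\bar b'| = |\bar b|$: the witness $\bar c'$ realizes $p_{k,l}(\bar x_{k,l})$ with $\bar x_{k,l} \supsetneq \bar x$, and Remark~\ref{re:Smallv} even asks for $|\bar c'| \geq |\bar b'|$. There is no control over the extra coordinates of $\bar c'$ from the $T^0$-match at the level of $\bar b$, so Lemma~\ref{l:loc-cpct-type-space}(\ref{i:p:loc-cpct-type-space:3}) does not apply, and there is no reason for $\tp(\bar c')$ to be realized in $M$ at all.

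The paper avoids this by transferring $\tp(\bar b')$, not $\tp(\bar c')$. From $T^0_{U,v/2}(\bar b)=T^0_{U',v'}(\bar b')$ one has $\tp(\bar b')\in\Theta_{|\bar b|}(M)$ and $\partial(\tp(\bar b),\tp(\bar b'))<v/2$, so openness of $\Phi$ yields $\bar d\in B^M_{v/2}(\bar b)$ with $\tp(\bar d)=\tp(\bar b')$. The existence of $\bar c'$ near $\bar b'$ is then encoded by the $F$-formula
\[
\inf_{\bar y}\bigl[(d(\,\cdot\,,\bar y)\dotdiv v/2)\vee p_{k,l}(\bar y)\bigr],
\]
which takes value $0$ at $\bar b'$ and therefore at $\bar d$. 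Compactness of $\cl{B^M_{v/2}(\bar d)}$ (available since $v<\rho(\bar b)$) converts this infimum into an actual $\bar c\in B^M_v(\bar b)$ with $p_{k,l}^M(\bar c)=0$. This formula-plus-compactness step is the missing idea in your plan; the same device drives the inductive case, where one matches $T^1_{U,v/2}(\bar b)$ with $T^1_{U',v'}(\bar b')$ and then locates $\bar c$ via a further $T^0$-match with $\bar c'$ rather than by appealing to $T^\alpha_u(\bar a)$ directly.

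As for the obstacle you flag --- repositioning $\bar b'$ into $B^N_{u_P}(\bar a')$ via openness of $\Phi$ at $\bar a'$ --- this does not work either, for the same arity reason ($|\bar b'|>|\bar a'|$ in general), and there is no distance bound between $\tp(\bar a')$ and $\tp(\bar b')$ to exploit. The paper's proof simply writes $\bar b'\in B^{N^{<\omega}}_{u_P}(\bar a')$ without further comment; in the only application (Theorem~\ref{th:isoTalpha}) one has $\bar a=\bar a'=\emptyset$ and $u=u'=1$, where the issue is vacuous.
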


\begin{proof}
Suppose that $T^1_{u}(\bar{a})=T^1_{u'}(\bar{a}')$, and fix a $1$-AE family $P(\bar{x})=\{p_{k,l}(x_{k,l})\}$ realized by $\bar{a}'$, and with $u_P \leq u$. Fix $\bar{b} \in B^{M^{<\omega}}_{u_P}(\bar{a})$, $v>0$, and $k \in \NN$. By Remarks \ref{re:Smallv} and \ref{re:good}(1), we can assume that $v<\rho(\bar{b})$, and there is $U \in \UUU$ such that $(U,v/2)$ is $\bar{b}$-good. Find $\bar{b}' \in  B^{N^{<\omega}}_{u_P}(\bar{a}')$, $U' \in \UUU$, and $v'>0$ such that $T^0_{U,v/2}(\bar{b})=T^0_{U',v'}(\bar{b}')$. As $\bar{a}'$ realizes $P(\bar{x})$, there is $\bar{c}' \in B^{N^{<\omega}}_{v/2}(\bar{b}')$, and $l$ such that $p_{k,l}^N(\bar{c}')=0$. But then 
\[ \inf_{\bar{x}} [(d(\bar{b},\bar{x}) \dotdiv v/2) \vee p_{k,l}(\bar{x})] \in \tp(\bar{b}'), \]
and, by Remark \ref{re:good}(2), there is $\bar{d} \in  B^N_{v/2}(\bar{b})$ with $\tp(\bar{d})=\tp(\bar{b}')$. By the compactness of $B^M_{v/2}(\bar{d})$, there is $\bar{c} \in B^M_{v/2}(\bar{d})$ such that $p_{k,l}^M(\bar{c})=0$. Clearly, $\bar{c} \in B^{M^{<\omega}}_v(\bar{b})$. As $\bar{b}$, $v$ and $k$ were arbitrary, this shows that $\bar{a}$ realizes $P(\bar{x})$.

Suppose now that $T^2_{u}(\bar{a})=T^2_{u'}(\bar{a}')$, and let $P(\bar{x})$ be a $2$-AE family realized by $\bar{a}'$, and with $u_P \leq u$. Fix $\bar{b} \in  B^{M^{<\omega}}_{u_P}(\bar{a})$, $v>0$, $k \in \NN$, and $U \in \UUU$ such that $(U,v/2)$ is $\bar{b}$-good. Fix $\bar{b}' \in B^{N^{<\omega}}_{u_P}(\bar{a}')$, $U' \in \UUU$ and $v'>0$ such that $T^1_{U,v/2}(\bar{b})=T^1_{U',v'}(\bar{b}')$. As $\bar{a}'$ realizes $P(\bar{x})$, there is $l$, and $\bar{c}' \in B^{N^{<\omega}}_{v/2}(\bar{b}')$ such that $\tp(\bar{c}') \upharpoonright |\bar{b}'| \in U'$, and  $\bar{c}'$ realizes $p_{k,l}(\bar{x}_{k,l})$. Fix $V,V' \in \UUU$, $0<w,w' \leq v/2$, and $\bar{d} \in B^{M^{<\omega}}_{v/2}(\bar{b})$ such that $(V',w')$ is $\bar{c}'$-good, and $T^0_{V,w}(\bar{d})=T^0_{V',w'}(\bar{c}')$. Then there is $\bar{c} \in B^{M^{<\omega}}_{w}(\bar{d})$ with $\tp(\bar{c})=\tp(\bar{c'})$, i.e., $\bar{c} \in B^{M^{<\omega}}_v(\bar{b})$, and $\bar{c}$ realizes $p_{k,l}(\bar{x}_{k,l})$.
	
For $\alpha>1$, this is an easy induction.
\end{proof}

Proposition \ref{pr:AET} combined with Corollary \ref{co:IsoAEAlpha} immediately gives:

\begin{theorem}
\label{th:isoTalpha}
Let $F$ be a fragment, and let $T$ be an $F$-theory all of whose models are locally compact. Suppose that $[M] \in \Pi^0_{1+\alpha}(t_F)$, $\alpha \geq 1$, for some $M \in \Mod(T)$. Then $$[M]=\{N \in \Mod(T): T^{\alpha}(N)=T^{\alpha}(M)\}.$$
\end{theorem}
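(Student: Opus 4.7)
The plan is to derive Theorem \ref{th:isoTalpha} by directly combining the two ingredients the author has just prepared, namely Corollary \ref{co:IsoAEAlpha} (an $\alpha$-AE family $P_M$ defining the isomorphism class $[M]$) and Proposition \ref{pr:AET} (the fact that $T^\alpha$-invariants control realization of $\alpha$-AE families in the locally compact setting). The only issue to watch is matching up the parameter $u_P$: we need the AE family produced by Corollary \ref{co:IsoAEAlpha} to be compatible with $T^\alpha(M) = T^\alpha_1(\emptyset)$.

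First I would dispose of the easy inclusion $[M] \subseteq \{N \in \Mod(T) : T^\alpha(N) = T^\alpha(M)\}$: this is immediate from Remark \ref{re:CongSameT}, which records that the invariants $T^\alpha$ depend only on the isomorphism type.

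Next, for the nontrivial inclusion, I would apply Corollary \ref{co:IsoAEAlpha} to obtain an $\alpha$-AE family $P_M(\emptyset)$ satisfying $[M] = \{N \in \Mod(L) : N \text{ models } P_M\}$. Tracing the proof of Theorem \ref{th:AEfamilies} with the input $A = [M]$, $\bar a = \emptyset$, $u = 0$ (so that $A^{*\emptyset,0} = A^* = [M]$), we see that the constructed AE family comes with $u_{P_M} = 0$. In particular $u_{P_M} \leq 1$. Now suppose $N \in \Mod(T)$ satisfies $T^\alpha(N) = T^\alpha(M)$, that is, $T^\alpha_1(\emptyset)$ computed in $N$ equals $T^\alpha_1(\emptyset)$ computed in $M$. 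Since $M \in [M]$, the empty tuple in $M$ realizes $P_M$. Applying Proposition \ref{pr:AET} with $\bar a = \emptyset \in N$, $\bar a' = \emptyset \in M$, and $u = u' = 1$ (the hypotheses of the proposition being satisfied as $u_{P_M} \leq u$ and $T^\alpha_1(\emptyset)$ agree in the two structures), we conclude that the empty tuple in $N$ also realizes $P_M$, i.e., $N$ models $P_M$, whence $N \in [M]$.

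There is no real obstacle here; the theorem is a packaging result. The only thing one has to be a bit careful about is the bookkeeping of the $u_P$ parameter, to make sure the AE family delivered by Corollary \ref{co:IsoAEAlpha} is admissible as input to Proposition \ref{pr:AET} relative to the quantity $T^\alpha(M) = T^\alpha_1(\emptyset)$ — but since the parameter coming out of the construction is $0$ and the parameter in $T^\alpha(M)$ is $1$, this is automatic.
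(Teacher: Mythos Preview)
Your proof is correct and follows exactly the route the paper takes: the theorem is stated there as an immediate consequence of Corollary~\ref{co:IsoAEAlpha} and Proposition~\ref{pr:AET}, together with the easy direction from Remark~\ref{re:CongSameT}. Your write-up simply makes the $u_P$ bookkeeping explicit. One tiny technical remark: Theorem~\ref{th:AEfamilies} is literally stated for $u \in \QQ^+$, so tracing it with $u = 0$ is formally outside its hypotheses; but since $[M]^{*\emptyset,u} = [M]$ for every $u > 0$, you can just as well run the construction with any $u \in (0,1] \cap \QQ$ and obtain $u_{P_M} \leq 1$, which is all you need.
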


\begin{theorem}
\label{th:CtbleModels}
Let $F$ be a fragment, and let $T$ be an $F$-theory all of whose models are locally compact. Then $\cong_T$ is classifiable by countable structures.
\end{theorem}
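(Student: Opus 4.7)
The plan is to construct a Borel map $\Psi \colon \Mod(T) \to Y$, where $Y$ is the standard Borel space of codes for hereditarily countable sets, such that
\[ M \cong_T N \iff \Psi(M) = \Psi(N), \]
and then compose with the classical Borel reduction from equality on hereditarily countable sets to graph isomorphism. This yields classifiability by countable structures.

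First I would combine Theorem \ref{th:isoTalpha} with the continuous Lopez--Escobar theorem (\cite[Theorem 6.3]{BeDoNiTs}): the latter guarantees that each isomorphism class $[M]$ is Borel in $\Mod(L)$, hence $[M] \in \bPi^0_{1+\alpha_M}(t_F)$ for some countable ordinal $\alpha_M$ depending on $M$. Theorem \ref{th:isoTalpha} then gives $[M] = \{N \in \Mod(T) : T^{\alpha_M}(N) = T^{\alpha_M}(M)\}$, and Remark \ref{re:CongSameT} supplies the converse direction, so that
\[ M \cong N \iff T^\alpha(M) = T^\alpha(N) \text{ for every } \alpha < \omega_1. \]

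Next I would verify, by transfinite induction on $\alpha$, that the assignment $M \mapsto T^\alpha(M)$ is Borel into a standard Borel space of hereditarily countable sets of rank at most $\alpha$ (coded, for instance, by wellfounded extensional relations on $\NN$). The base case reduces to checking, for each parameter tuple $(\bar a, U, \epsilon)$ drawn from a countable family, that the $\bar a$-goodness of $(U,\epsilon)$ and the identity of the $\tau$-closed set $T^0_{U,\epsilon}(\bar a)$ depend Borelly on $M$; this is immediate from Lemma \ref{l:loc-cpct-type-space} and the fact that formulas in $F$ evaluated on tuples from $\Seq$ are Borel functions of $M$. The inductive step is routine, since only countably many parameter choices occur at each level.

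The main obstacle is that $\alpha_M$ is not a Borel function of $M$, so we cannot simply take $\Psi(M) := T^{\alpha_M}(M)$. I would resolve this by letting $\Psi(M)$ code the \emph{entire} transfinite tower $\bigcup_{\alpha < \omega_1} T^\alpha(M)$ as a hereditarily countable object. A Scott-rank-style stabilization argument, available because only countably many distinct values of $T^\alpha_{U,\epsilon}(\bar a)$ can appear as $(\alpha, U, \epsilon, \bar a)$ varies, shows that this tower stabilizes at some countable $\alpha^*_M$ and hence the coded object is genuinely hereditarily countable. Borelness of $\Psi$ follows from the Borelness of the individual stages $M \mapsto T^\alpha(M)$ together with the standard trick of representing the union as a single wellfounded extensional relation on $\NN$ assembled from these Borel pieces; the final composition with the classical reduction $(\mathrm{HC}, =) \leq_B$ graph isomorphism concludes the proof.
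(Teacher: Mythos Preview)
The stabilization claim fails, and with it the whole construction. The tower $(T^\alpha(M))_{\alpha<\omega_1}$ \emph{never} stabilizes, because each $T^\alpha_{U,\epsilon}(\bar a)$ has set-theoretic rank exactly $\alpha$ over the level-$0$ atoms: since a parameter triple $(\bar a, U,\epsilon)$ always extends itself, the recursion gives $T^\beta_{U,\epsilon}(\bar a)\in T^\alpha_{U,\epsilon}(\bar a)$ for every $\beta<\alpha$, so by induction the rank of $T^\alpha_{U,\epsilon}(\bar a)$ is $\alpha$. Hence the sets $T^\alpha_{U,\epsilon}(\bar a)$ are pairwise distinct as $\alpha$ varies, $\bigcup_{\alpha<\omega_1} T^\alpha(M)$ is uncountable, and your proposed invariant is not hereditarily countable. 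A Scott-style argument would at best show that the equivalence relations $(\bar a,U,\epsilon)\equiv_\alpha(\bar a',U',\epsilon')\Leftrightarrow T^\alpha_{U,\epsilon}(\bar a)=T^\alpha_{U',\epsilon'}(\bar a')$ eventually stabilize, which is a much weaker statement and does not collapse the tower of sets. Even were this not an issue, ``assembling $\omega_1$ Borel stages'' is not a standard trick: without a uniform-in-$M$ bound on the relevant ordinal, a transfinite union of Borel maps need not be Borel.

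The paper sidesteps the tower entirely. It assigns to $M$ a single countable relational structure $C_M$ whose universe consists of the tuples $\big(\overline{B_\epsilon(\tp(\bar a))\cap U}^{\,\tau},\,|\bar a|,\,U,\,\epsilon\big)$ for $\bar a$-good $(U,\epsilon)$, equipped with unary relations $O_l$ recording which basic $\tau$-open sets miss the closed set, relations $R_{k,l,\delta}$ recording the parameter values, and a binary relation $E$ recording the extension order on parameters. Because $O_l$ and $R_{k,l,\delta}$ pin down each element, any isomorphism $C_M\to C_N$ must be the identity; thus $C_M\cong C_N$ forces $C_M=C_N$, and from $C_M=C_N$ one recovers $T^\alpha(M)=T^\alpha(N)$ for every $\alpha$ by transfinite induction, since the recursion defining $T^\alpha$ uses only the level-$0$ closed sets and the $E$-relation already recorded in $C_M$. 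Borelness of $M\mapsto C_M$ then follows directly from the Borelness of the level-$0$ ingredients you correctly identified---no transfinite assembly is needed. In short, your instinct to reduce via the $T^\alpha$ invariants is right, but the correct move is to encode the \emph{generating data} for the tower as a countable structure rather than the tower itself.
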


\begin{proof}
First, for a given $M \in \Mod(T)$, we construct a countable structure $C_M$, essentially, as in the proof of \cite{Hj}[Lemma 6.30]. Its universe consists of elements $x$ of the form $$x=(\cl[\tau]{B_\epsilon(\tp(\bar{a})) \cap U },|\bar{a}|,U,\epsilon),$$ where $\bar{a} \in \NN^{<\NN}$, $U \in \UUU_{|\bar{a}|}$, $\epsilon \in \QQ^+$, and $(U,\epsilon)$ is $\bar{a}$-good. The relevant information carried by these objects is recorded with an aid of the relations $O_l$, $R_{k,l,\delta}$, $k,l \in \NN$, $\delta \in \QQ^+$, and $E$, defined, for $x=(\cl[\tau]{B_\epsilon(\tp(\bar{a})) \cap U },|\bar{a}|,U,\epsilon)$, $x'=(\cl[\tau]{B_{\epsilon'}(\tp(\bar{a}')) \cap U' },|\bar{a}'|,U',\epsilon')$, as follows:

\begin{itemize}
	\item $O_l(x)$ iff $U_{l,|\bar{a}|} \cap \cl[\tau]{B_\epsilon(\tp(\bar{a})) \cap U }=\emptyset$,
	\item $R_{k,l,\delta}(x)$ iff $k=|\bar{a}|$, $U=U_{l,k}$, $\delta=\epsilon$,
	\item $x E x'$ iff $|\bar{a}'| \geq |\bar{a}|$, $U' \upharpoonright |\bar{a}| \subseteq U$,  $\epsilon' \leq \epsilon$.
\end{itemize}

By Remark \ref{re:good}(3), $M \cong N$ implies that $C_M=C_N$. On the other hand, as the relations $O_l$ record complements of sets $\cl[\tau]{B_\epsilon(\tp(\bar{a})) \cap U }$ in $S_{|\bar{a}|}(T)$, we have that $$\pi((\cl[\tau]{B_\epsilon(\tp(\bar{a})) \cap U },|\bar{a}|,U,\epsilon))=(\cl[\tau]{B_\epsilon(\tp(\bar{a})) \cap U },|\bar{a}|,U,\epsilon)$$ for any isomorphim $\pi:C_M \rightarrow C_N$.
It obviously follows that $T^0_{U,\epsilon}(\bar{a})=T^0_{U,\epsilon}(\bar{a'})$, if $\pi(x)=x'$. And $E$ warranties that $T^\alpha_{U,\epsilon}(\bar{a})=T^\alpha_{U,\epsilon}(\bar{a'})$ for $\alpha>0$. In particular, $T^\alpha(M)=T^\alpha(N)$ for $\alpha<\omega_1$.

It is not hard to construct a Borel mapping $\Mod(T) \rightarrow 2^\NN$, $M \mapsto D_M$, so that $D_M$ codes a countable model isomorphic to $C_M$. First, by \cite[Lemma 6.4]{HaMaTs}, the mappings
\[ \Mod(T) \times \NN^n \rightarrow \RR, \ (M,\bar{a}) \mapsto \rho_M(\bar{a}), \]
\[ \Mod(T) \times \NN^n \times \QQ^+ \rightarrow \mathcal{K}(S_n(T)), (M,\bar{a},r) \mapsto B_{\leq r}(\tp(\bar{a})), \]
where $\mathcal{K}(X)$ is the standard Borel space of closed subsets of $X$, are Borel. Therefore the relation  ``$(U,\epsilon)$ is $\bar{a}$-good in $M$'', regarded as a subset of $\Mod(T) \times \NN^{<\NN} \times \UUU \times \QQ^+$, is also Borel. This gives rise to a Borel enumeration $e: \NN \rightarrow (\bigsqcup_n \KK(S_n(T))) \times \UUU \times \QQ^+$ of the universe of $C_M$. Using this $e$, we can easily construct the required Borel mapping $M \mapsto D_M$. 

By \cite[Corollary 5.6]{BeDoNiTs}, for every $M \in \Mod(T)$, the isomorphism class $[M]$ is Borel, i.e., $[M] \in \bPi^0_\alpha(t_F)$ for some $\alpha<\omega_1$. Hence, Theorem \ref{th:isoTalpha} implies that $M \cong N$ iff $D_M \cong D_N$. 
\end{proof}

The next result confirms a conjecture stated by Gao and Kechris in \cite{GaKe} (Hjorth, see \cite{GaKe}, announced a positive answer for its weaker form, with a $\Delta^1_2$-reduction).

\begin{theorem}
\label{th:Isometry}
Isometry of locally compact Polish metric spaces is Borel reducible to graph isomorphism.
\end{theorem}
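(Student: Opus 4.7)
The plan is to realize isometry of locally compact Polish metric spaces as the relation $\cong_T$ on $\Mod(T)$ for some $F$-theory $T$ whose models are all locally compact, and then invoke Theorem \ref{th:CtbleModels} together with the classical fact that isomorphism on a Borel class of countable structures is Borel reducible to graph isomorphism.

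I would work in the signature $L = \{d\}$, so that $\Mod(L)$ codes Polish metric spaces via tail-dense sequences and $\cong_L$ is literally isometry; since replacing an unbounded metric $d$ by $\min(d,1)$ preserves the topology, local compactness, and the isometry group of the space, we may (and do) assume $d$ is bounded, as required by the framework. Next, I would verify that the subset $\Mod(L)^{\mathrm{lc}} \subseteq \Mod(L)$ of locally compact structures is Borel. In a complete metric space with dense subset $\{a_k\}$, the closed ball $\overline{B_r(a_i)}$ is compact iff $\{a_k : d(a_i, a_k) < r\}$ is totally bounded, so local compactness of $M_\xi$ unwinds to
\[
\forall i \in \NN \ \exists r \in \QQ^+ \ \forall \epsilon \in \QQ^+ \ \exists F \in [\NN]^{<\omega} \ \forall k \in \NN \ \bigl( d(a_i, a_k) < r \Rightarrow \exists j \in F \ d(a_k, a_j) < \epsilon \bigr),
\]
which is arithmetical in $\xi$, hence Borel. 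As $\Mod(L)^{\mathrm{lc}}$ is manifestly isometry-invariant, the continuous Lopez--Escobar theorem (\cite[Theorem 6.3]{BeDoNiTs}) yields a countable theory $T$ in some countable fragment $F$ with $\Mod(T) = \Mod(L)^{\mathrm{lc}}$, and by construction every model of $T$ is locally compact.

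Theorem \ref{th:CtbleModels} then supplies a Borel reduction of $\cong_T$ to the isomorphism relation on a Borel class of countable structures in a countable relational signature, and composing with the Friedman--Stanley reduction of isomorphism on countable structures to graph isomorphism (see, e.g., \cite{Gao}) completes the proof. The only non-bookkeeping task is the verification that local compactness is Borel and isometry-invariant; everything else is already encapsulated in Theorems \ref{th:isoTalpha} and \ref{th:CtbleModels}, which have done all the real work.
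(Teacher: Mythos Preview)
Your approach is essentially the paper's, and the real work is indeed already done by Theorem \ref{th:CtbleModels}. However, there is a slip: the truncation $d \mapsto \min(d,1)$ does \emph{not} preserve the isometry relation between spaces. For instance, the two-point spaces $\{0,2\}$ and $\{0,3\}$ (with the metric inherited from $\RR$) are non-isometric, yet both become two-point spaces of diameter $1$ after truncation. Preserving the isometry \emph{group} of a single space is not enough; you need $(X,d)\cong(Y,d')$ iff the transformed spaces are isometric. The fix is to use a strictly increasing bounded transformation such as $d \mapsto d/(1+d)$: it produces an equivalent complete metric (so local compactness is preserved) and, being injective on $[0,\infty)$, it preserves the isometry relation. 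This is exactly what the paper does.

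A second, more cosmetic point: the statement implicitly refers to a standard coding of locally compact Polish metric spaces, typically as a Borel subset $\mathcal{LC}\subseteq\mathcal{K}(\UU)$ of the hyperspace of the Urysohn space. To complete the reduction you should exhibit a Borel map $\mathcal{LC}\to\Mod(L)$ sending $K$ to a code for $(K,\,d/(1+d))$; this is routine via the Kuratowski--Ryll-Nardzewski selection theorem, which Borel-selects a tail-dense sequence in each $K$. The paper records this step explicitly; your proposal files it under bookkeeping, which is acceptable once you say what is being filed. Your route to axiomatizing local compactness---checking the condition is Borel and then invoking Lopez--Escobar---is equivalent to the paper's direct claim that it is expressible by an $\IL(L)$ sentence.
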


\begin{proof}
Every locally compact Polish metric space $K$ can be coded as $M_K \in \Mod(L)$ with the trivial signature $L$, and metric bounded by $1$. Simply, pick a countable tail-dense subset of $K$, and replace the original metric $d$ with the metric $1/(1+d)$ which does not change the isometry relation. Actually, for $\mathcal{LC} \subseteq \mathcal{K}(\UU)$ denoting the standard Borel space of all locally compact Polish metric spaces, regarded as subsets of the Urysohn space $\UU$, the coding $\mathcal{LC} \rightarrow \Mod(L)$, $K \mapsto M_K$, can be defined in a Borel way: the Kuratowski--Ryll-Nardzewski theorem yields a Borel function $f:\mathcal{K}(\UU) \rightarrow \UU^\NN$ such that $f(K)=(k_n)$ is a tail-dense subset of $K$. As the signature $L$ is trivial, the isomorphism relation $\cong$ is just the isometry relation. Moreover, the property of being locally compact can be expressed as a sentence in $\IL(L)$, so the set of all possible codes of locally compact Polish metric spaces is of the form $\Mod(T)$. By Theorem \ref{th:CtbleModels} and \cite[Theorem 13.1.2]{Gao}, the isometry relation is Borel reducible to graph isomorphism.
\end{proof}

\paragraph{\textbf{Borel isomorphism relations}}
As a matter of fact, a more detailed analysis can be performed in case the isomorphism relation is Borel. Let $\PP^0(\NN)=\NN$, and, for $\alpha <\omega_1$, let $\PP^\alpha(\NN)=$ all countable subsets of $\PP^{<\alpha}(\NN) \cup \NN$, where $\PP^{<\alpha}(\NN)= \bigcup_{\beta<\alpha} \PP^{\beta}(\NN)$. Thus, $\PP^1(\NN)$ (the reals) consists of all subsets of $\NN$, $\PP^2(\NN)$ of all countable sets of reals, etc. We denote the equality relation on $\PP^{\alpha}(\NN)$ by $=_{\alpha}$. In \cite{HjKeLo}, it is explained how elements of $\PP^{\alpha}(\NN)$ can be coded as countable models so that $=_{\alpha}$ becomes an isomorphism relation of Borel class $\bPi^0_{\alpha+1}$.

For a fragment $F$ in signature $L$, the rank $\rk_F(\phi)$ is defined by $\rk_F(\phi)=0$ for $\phi \in F$, $\rk_F(\sup_x\phi)=\rk_F(\inf_x\phi)=\rk_F(r\phi)=\rk_F(\phi)$, $\rk_F(\phi_1 \vee \phi_2)=\rk_F(\phi_1 + \phi_2)=\max\{\rk_F(\phi_1), \rk_F(\phi_2)\}$, and $\psi$ $\rk_F(\phi)=\sup\{\rk_F(\phi_i)+1\}$ if  $\phi$ is an infinite supremum $\bigvee_i \phi_i$ or infinite infimum $\bigwedge_i \phi_i$. We also put $\rk_F(X)=\sup \{\rk_F(\phi): \phi \in X\}$ for a collection of formulas $X$. Note that it is straightforward to code a formula $\phi$ as an element of $\PP^\alpha(\NN)$ if $\rk_F(\phi)\leq \alpha$.

\begin{theorem}
	\label{th:Sigma0alpha}
	Let $L$ be a signature, let $t$ be a Polish topology on $\Mod(L)$ consisting of Borel subsets of the standard topology, and let $\alpha<\omega_1$. There exists a fragment $F$ such that $A^*, A^{* \bar{a},1/k} \in \bPi^0_\alpha(t_{F})$ for every $A \in \bPi^0_\alpha(t)$, $\bar{a} \in \NN^{<\NN}$, and $k>0$.
\end{theorem}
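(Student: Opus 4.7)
The plan is to construct a countable fragment $F$ of $\IL(L)$ with $t\subseteq t_F$, and then to prove by transfinite induction on $\alpha$ that every relative Vaught transform $A^{*\bar{a},u}$ of a $\bPi^0_\alpha(t_F)$-set is again in $\bPi^0_\alpha(t_F)$, for all $\bar{a}\in\Seq$ and $u\in\QQ^+\cup\{0\}$. Since $t\subseteq t_F$ entails $\bPi^0_\alpha(t)\subseteq\bPi^0_\alpha(t_F)$, this will yield the theorem.

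To construct $F$, I would fix a countable basis $\{B_n\}$ for the Polish topology $t$. Each $B_n$ is Borel in the standard topology on $\Mod(L)$, so by a continuous-logic analog of the fact that every Borel subset of a model space is definable by an $\IL$-formula with parameters---proved by induction on Borel rank, starting from the basic open sets $[\phi(\bar{a})<r]$ with $\phi$ quantifier-free finitary, and using infinitary $\bigvee$, $\bigwedge$ to realize countable Borel operations after truncating and rescaling the components to share a common modulus of continuity and bound---one obtains $\psi_n\in\IL(L)$, $\bar{a}_n\in\Seq$, and $r_n\in\QQ$ with $B_n=\{M:\psi_n^M(\bar{a}_n)<r_n\}$. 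I would then let $F$ be the fragment generated by $\{\psi_n:n\in\NN\}$, so that each $B_n$ is basic $t_F$-open and $t\subseteq t_F$.

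For the base case $\alpha=1$, a $t_F$-closed set $A$ has the form $\bigcap_n V_n^c$ with $V_n=[\phi_n(\bar{b}_n)<s_n]$ basic $t_F$-open, and because $\forall^*$ commutes with countable conjunction, $A^{*\bar{a},u}=\bigcap_n (V_n^c)^{*\bar{a},u}$. The zero set $V_n^c=[s_n\dotdiv\phi_n(\bar{b}_n)=0]$ has open complement, which (as in the final computation in the proof of Theorem \ref{th:AEfamilies}) allows $\exists^*$ to be replaced by $\exists$ on the $D(M)$-side, and the surjectivity of $\pi$ then converts this to an existential quantifier over tuples in $M$; the result is that $(V_n^c)^{*\bar{a},u}$ is the zero set of a formula obtained from $\phi_n$ by a $\sup$ quantifier, hence in $F$ since $F$ is closed under quantifiers. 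Thus $A^{*\bar{a},u}$ is a countable intersection of $t_F$-closed sets. For the inductive step, I would write $A\in\bPi^0_\alpha(t_F)$ as $\bigcap_n A_n$ with $A_n\in\bSigma^0_{\beta_n}(t_F)$, $\beta_n<\alpha$, so $A^{*\bar{a},u}=\bigcap_n A_n^{*\bar{a},u}$, and use the duality $(A_n^{*\bar{a},u})^c=(A_n^c)^{\Delta\bar{a},u}$ together with the Kuratowski--Ulam identity
\[(A_n^c)^{\Delta\bar{a},u}=\bigcup\{(A_n^c)^{*\bar{b},v}:\bar{b}\in\Seq,\ v\in\QQ^+,\ B_v(\bar{b})\subseteq B_u(\bar{a})\},\]
valid because Borel subsets of $D(M)$ have the Baire property. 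The inductive hypothesis places each $(A_n^c)^{*\bar{b},v}$ in $\bPi^0_{\beta_n}(t_F)$, so $A_n^{*\bar{a},u}$ is the complement of a $\bSigma^0_{\beta_n+1}(t_F)$-set, and intersecting gives $A^{*\bar{a},u}\in\bPi^0_\alpha(t_F)$.

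The main obstacle is the first step: expressing a basis of $t$ by $\IL(L)$-formulas at parameters. In continuous infinitary logic the $\bigvee$, $\bigwedge$ require their constituents to share a common modulus of continuity and bound, so the Borel-rank induction has to be organized so that at each stage the formulas being joined can be uniformly normalized---e.g.\ by composition with a bounded Lipschitz function---without changing the sets they define. A secondary technical point is the precise statement of the Kuratowski--Ulam identity for relative Vaught transforms over balls around tuples of differing lengths in $\Seq$, which must be checked against the definitions of $B_u(\bar{a})$ and $B_v(\bar{b})$ in $D(M)$.
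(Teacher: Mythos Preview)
Your first step—producing a fragment $F$ with $t\subseteq t_F$—does not work, and the obstruction is not the modulus-of-continuity constraint you single out but something more basic: any set of the form $[\psi(\bar{a})<r]$ is invariant under the pointwise stabilizer of the finite tuple $\bar{a}$ in the logic action, so every $t_F$-open set is a union of such stabilizer-invariant pieces. An arbitrary Borel subset of $\Mod(L)$ need not have this structure. Already in the classical discrete case, the closed set $B=\{M:P^M(n)\text{ holds iff }n\text{ is even}\}$ is $t_F$-open for no fragment $F$: if $M\in B$ lay in some $[\psi(\bar{a})]\subseteq B$, any permutation fixing $\bar{a}$ but transposing a large even/odd pair would move $M$ out of $B$ while keeping it in $[\psi(\bar{a})]$. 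Your Borel-rank induction breaks at exactly this point: a countable intersection $\bigcap_n[\phi_n(\bar{a}_n)\le r_n]$ with unbounded parameter tuples $\bar{a}_n$ cannot be collapsed to a single $[\psi(\bar{a})\le r]$, because an $\IL$-formula has only finitely many free variables. Hence in general there is \emph{no} fragment $F$ with $t\subseteq t_F$, and the remainder of your scheme (which is otherwise reasonable) never gets started.

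The paper sidesteps this by never trying to capture $A$ itself as a formula. The Vaught transform $A^{*\bar{a},1/k}$, unlike $A$, \emph{is} invariant under the stabilizer of $\bar{a}$ regardless of what $A$ is, and the continuous Lopez--Escobar theorem applied to the function $\inf^*_{y\in D(M)}(\chi_A(\pi(y))\vee kd(y,\bar{a}))$ yields a genuine $\IL(L)$-formula $\phi_{A,k}(\bar{x})$ with $A^{*\bar{a},1/k}=[\phi_{A,k}(\bar{a})=1]$. The fragment $F$ is generated from these Lopez--Escobar formulas for co-basic $A$ at the base level and enlarged inductively with $\alpha$. The essential move is thus ``transform first, then invoke definability on the already-invariant result,'' not ``define $A$, then transform.''
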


\begin{proof}
	We prove by induction on $\alpha$ that if $A \in \bSigma^0_\alpha(t)$, then there is a fragment $F$ such that $A^\Delta, A^{\Delta u,k} \in \bSigma^0_\alpha(t_F)$. For $\alpha=2$, fix a countable basis $\mathcal{A}$ for the topology $t$. Without loss of generality, we can assume that it contains the standard basis on $\Mod(L)$, and is closed under finite intersections. It is straightforward to observe that for every $M \in \Mod(L)$, $A \subseteq \Mod(L)$, $\bar{a} \in \NN^{<\NN}$, and $k>0$
	\[ \forall^* y \in B^{D(M)}_{1/k}(\bar{a}) (\pi(y) \in A) \Leftrightarrow \mbox{inf}^*_{y \in D(M)} (\chi_{A}(\pi(y)) \vee kd(y,\bar{a}))=1. \]
	By \cite[Theorem 6.3]{BeDoNiTs}, there exists a formula $\phi_{A,k}$ such that
	\[  \forall^* y \in B^{D(M)}_{1/k}(\bar{a}) (\pi(y) \in A) \Leftrightarrow \phi_{A,k}^M(\bar{a})=1.  \]
	Let $F$ be the fragment generated by such formulas for $A \subseteq \Mod(L)$ whose complement is in $\mathcal{A}$.
	Fix $A \in \bSigma^0_2(t)$, and $A_{n,m}$, $n, m \in \NN$, whose complement is in $\mathcal{A}$ and
	\[ A=\bigcup_n \bigcap_m A_{n,m}. \]
	%
	Then
	\[ M \in A^{\Delta \bar{a},1/k} \Leftrightarrow  \exists^* y \in B^{D(M)}_{1/k}(\bar{a}) \, \exists n \forall m \, (\pi(y)\in A_{n,m}) \Leftrightarrow \]
	\[ \exists n \exists^* y \in B^{D(M)}_{1/k}(\bar{a}) \forall m \, (\pi(y) \in A_{n,m}) \Leftrightarrow \]
	\[ \exists n \exists \bar{a}'  \in \NN^{<\NN} \exists k' \forall^* y \in B^{D(M)}_{1/k'}(\bar{a}') \forall m \, (d(\bar{a}',\bar{a}) \leq 1/k \mbox{ and } \pi(y) \in A_{n,m}) \Leftrightarrow \]
	\[ \exists n \exists \bar{a}' \in \NN^{<\NN}  \exists k'  \forall m \forall^* y \in B^{D(M)}_{1/k'}(\bar{a}') \, (d(\bar{a}',\bar{a}) \leq 1/k \mbox{ and } \pi(y) \in A_{n,m}), \]

	so there exist $B_{\bar{a}',k',n,m} \in \mathcal{A}$ such that 
	\[  A^{\Delta \bar{a},1/k}=\bigcup_n \bigcup_{\bar{a}'} \bigcup_{k'} \bigcap_m  \bigcap_{\epsilon>0} [\phi_{B_{\bar{a}',k',n,m},k}(\bar{a}) \leq \epsilon], \]
	i.e., $A \in \bSigma^0_2(t_F)$. For $A^\Delta$, the argument is analogous. 
	
	Suppose now that the lemma holds for all $\beta<\alpha$. Fix $A \in \bSigma^0_\alpha(t)$, $A_{n,m} \in \bSigma^0_{\beta_{n,m}}(t)$, $n,m \in \NN$, $\beta_{n,m}<\alpha$, such that
	\[ A=\bigcup_n \bigcap_m A_{n,m}. \]
	Then
	\[ M \in A^{\Delta \bar{a},1/k} \Leftrightarrow  \exists^* y \in B^{D(M)}_{1/k}(\bar{a}) \, \exists n \forall m \, (\pi(y)\in A_{n,m}) \Leftrightarrow \]
	\[ \exists n \exists^* y \in B^{D(M)}_{1/k}(\bar{a}) \forall m \, (\pi(y) \in A_{n,m}) \Leftrightarrow \]
	\[ \exists n \exists \bar{a}' \in \NN^{<\NN} \exists k' \forall^* y \in B^{D(M)}_{1/k'}(\bar{a}') \forall m \, (\pi(y) \in A_{n,m}) \Leftrightarrow \]
	\[ \exists n \exists \bar{a}' \in \NN^{<\NN} \exists k' \forall m \forall^* y \in B^{D(M)}_{1/k'}(\bar{a}') \, (\pi(y) \in A_{n,m}) \Leftrightarrow \]
	\[ \exists n \exists \bar{a}' \in \NN^{<\NN} \exists k' \forall m \forall \bar{a}'' \in \NN^{<\NN} \forall k'' \exists^* y \in B^{D(M)}_{1/k''}(\bar{a}'') \big( d(\bar{a},\bar{a}')<1/k \mbox{ and } \]
	\[ (d(\bar{a}',\bar{a}'') \geq 1/k \mbox{ or } \pi(y) \in A_{n,m}) \big)   \Leftrightarrow \]
	\[ \exists n \exists \bar{a}' \in \NN^{<\NN} \exists k' \forall m \forall \bar{a}'' \in \NN^{<\NN} \forall k'' (M \in B^{\Delta \bar{a}',k'}_{\bar{a},\bar{a}',k',n,m}), \]
	where $B_{\bar{a},\bar{a}',k',n,m} \in \bSigma^0_{\beta_{n,m}}(t)$. By the inductive hypothesis, there are fragments $F_{\bar{a}',k'}$ such that $B^{\Delta \bar{a}',k'}_{\bar{a},\bar{a}',k',n,m} \in \bSigma^0_\alpha(t_{F_{\bar{a}',k'}})$. Therefore  $A^{\Delta u,k} \in \bSigma^0_\alpha(t_F)$, where $F$ is the fragment generated by all $F_{\bar{a}',k'}$.
	
\end{proof}

Since $A^*=A$, if $A=[M]$, the above gives the following:

\begin{corollary}
	\label{co:Sigma0alpha}
	Let $L$ be a signature, and let $T$ be a theory such that $\cong_T$ is potentially $\bPi^0_\alpha$. There exists a fragment $F$ such that $[M] \in \bPi^0_\alpha(t_F)$ for every $M \in \Mod(T)$.
\end{corollary}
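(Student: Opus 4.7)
The plan is to reduce directly to Theorem \ref{th:Sigma0alpha} using the observation, noted immediately before the statement, that the Vaught transform $A^*$ equals $A$ whenever $A$ is isomorphism-invariant.

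Since $\cong_T$ is potentially $\bPi^0_\alpha$, I would fix a Polish topology $t$ on $\Mod(T)$, consisting of Borel sets of the standard topology on $\Mod(L)$, with $\cong_T \in \bPi^0_\alpha(t \times t)$. Each isomorphism class $[M]$ is a horizontal section of $\cong_T$, so $[M] \in \bPi^0_\alpha(t)$ for every $M \in \Mod(T)$. Since Theorem \ref{th:Sigma0alpha} is stated for Polish topologies on $\Mod(L)$, I would lift $t$ to such a topology $t'$ on $\Mod(L)$: using the standard descriptive set theoretic fact that a Borel subset of a Polish space can be made clopen by a refinement of the topology consisting of Borel sets, I refine the standard topology so that $\Mod(T)$ is clopen, and then glue $t$ on $\Mod(T)$ with the refined topology on $\Mod(L) \setminus \Mod(T)$. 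The resulting $t'$ is Polish, consists of Borel sets of the standard topology, and satisfies $t'|_{\Mod(T)} = t$; in particular, $[M] \in \bPi^0_\alpha(t')$ for every $M \in \Mod(T)$.

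Applying Theorem \ref{th:Sigma0alpha} to $t'$ then produces a single fragment $F$ (independent of $M$) such that $A^* \in \bPi^0_\alpha(t_F)$ for every $A \in \bPi^0_\alpha(t')$; in particular, $[M]^* \in \bPi^0_\alpha(t_F)$ for every $M \in \Mod(T)$. It remains to verify the equality $[M]^* = [M]$. If $N \cong M$, then for every $y \in D(N)$ we have $\pi(y) \cong N \cong M$, so $\pi(y) \in [M]$ (indeed, for all $y$, not merely comeagerly), whence $N \in [M]^*$. Conversely, if $N \in [M]^*$, some $y \in D(N)$ satisfies $\pi(y) \in [M]$, and since $\pi(y) \cong N$ we get $N \in [M]$. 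This gives $[M] \in \bPi^0_\alpha(t_F)$ for every $M \in \Mod(T)$, as required.

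I do not foresee any real obstacle here; the topology-lifting construction is routine, and the remaining steps are essentially bookkeeping around the definitions. If one wants $t_F$ to moreover be Polish on $\Mod(T)$, one can enlarge $F$ to contain the (countably many) sentences in $T$ without affecting anything above.
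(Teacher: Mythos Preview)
Your proof is correct and follows the same route as the paper: take sections to get $[M]\in\bPi^0_\alpha(t)$, apply Theorem~\ref{th:Sigma0alpha}, and use the invariance identity $[M]^*=[M]$. The paper records only this last step as its one-line justification; your version additionally spells out the passage from a Polish topology on $\Mod(T)$ (as in the definition of ``potentially $\bPi^0_\alpha$'') to one on $\Mod(L)$ (as in the hypothesis of Theorem~\ref{th:Sigma0alpha}), which the paper leaves implicit.
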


The following fact is due to Todor Tsankov.

\begin{proposition}
	\label{pr:atomic-to-categorical}
Let $L$ be a signature.	For every fragment $F$, there exists a fragment $F' \supseteq F$ such that $\Th_{F'}(M)$ is $\aleph_0$-categorical for every $F$-atomic model $M \in \Mod(L)$.
\end{proposition}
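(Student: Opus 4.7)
The plan is to build $F'$ by adjoining to $F$ a countable family of infinitary ``isolation formulas'', together with an omitting-types-style sentence forcing every tuple in a model to realize one of them, and then to deduce $\aleph_0$-categoricity via a back-and-forth argument.

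First, I would enumerate in a countable way all triples $(\bar{x}, (\phi_k)_k, (r_k)_k)$ where $\bar{x}$ is a finite tuple of variables, the $\phi_k \in F$ have free variables in $\bar{x}$ and respect a common modulus of continuity and bound, and $(r_k)$ is a decreasing sequence of positive rationals tending to $0$. For each such triple, form the infinitary formula $\psi_i(\bar{x}) = \bigvee_k (\phi_k(\bar{x}) \dotdiv r_k)$, and let $F'$ be the closure of $F \cup \{\psi_i : i \in \NN\}$ under finitary connectives, quantifiers, subformulas, and term substitution. Both the initial fragment and the new list being countable, $F'$ is still a countable fragment; care is taken in the enumeration to group the $\psi_i$'s by shared moduli and bounds so that the further infinitary combinations we need to form remain well-defined.

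Given an $F$-atomic $M$ and a realized type $p = \tp_F(\bar{a})$, I would use the fact that $\tau$- and $\partial$-topologies agree on a neighborhood of $p$ to select $(\phi_k, r_k)$ with $\phi_k^M(\bar{a}) < r_k$ for all $k$ and such that the intersection of the $\tau$-open sets $\{q : \phi_k(q) \leq r_k\}$ is contained in arbitrarily small $\partial$-balls around $p$; the corresponding $\psi_i$ then satisfies $\psi_i^M(\bar a) = 0$, and for any $N \models \Th_{F'}(M)$ and any tuple $\bar b$ in $N$ with $\psi_i^N(\bar b) = 0$, the tuple $\bar b$ must realize $p$ up to arbitrarily small $\partial$-error. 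Since $M$ is $F$-atomic, every tuple of $M$ is isolated by some $\psi_i$, so for each arity $n$ the sentence $\sup_{\bar x} \inf_{i \in I_n} \psi_i(\bar x) = 0$ (with $I_n$ the sublist of arity $n$) belongs to $\Th_{F'}(M)$ and transfers the same property to any $N \models \Th_{F'}(M)$.

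A back-and-forth argument then builds an isomorphism $M \to N$. One enumerates tail-dense sequences on each side and alternately extends a partial match by using a suitable $\psi_i$ to locate, on the opposite side, a tuple realizing the corresponding isolated $F$-type. The main obstacle lies precisely in this extension step: the isolation formulas only pin down types up to $\partial$-balls of prescribed radius, so the element produced on the opposite side realizes the desired type only approximately, and one must control the cumulative $\partial$-error so that the limiting map is well-defined. This is handled by choosing the $(r_k)$-sequences to shrink fast enough at each back-and-forth stage that the errors are summable; completeness of the Polish metrics then upgrades the limit of the partial matchings to a surjective isometric bijection, which preserves $F$-types by construction and is therefore an isomorphism of Polish metric structures.
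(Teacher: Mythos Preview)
Your construction of $F'$ has a countability gap. The collection of triples $(\bar x,(\phi_k)_k,(r_k)_k)$ with $(\phi_k)_k\in F^{\NN}$ and $(r_k)_k$ a rational sequence decreasing to $0$ has cardinality $2^{\aleph_0}$, so your list $\{\psi_i\}$ is uncountable and the object it generates is not a fragment (fragments are countable by definition). Nor can you cut down to a countable sublist in advance without reference to $M$: each of your $\psi_i$ is designed to pin down a \emph{specific} isolated type, and which sequence $(\phi_k,r_k)$ does so depends on that type, hence on the model, while the proposition requires a single $F'$ that works for every $F$-atomic $M$ simultaneously. The attempt to name every isolated type by an explicit infinitary formula is therefore the wrong target.

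The paper avoids enumerating isolated types altogether. It writes down one $\IL(L)$ sentence, independent of $M$, that expresses ``every realized type is $F$-isolated'' by quantifying over the formulas of $F$ \emph{inside} the sentence via the infinitary connectives: for each arity,
\[
\sup_{\bar z}\ \bigwedge_{\psi\in F_1}\ \sup_{\bar x}\ \Big[(1\dotdiv\psi(\bar x))\wedge\bigvee_{\phi\in F_1}|\phi(\bar x)-\phi(\bar z)|\Big]=0,
\]
where the inner $\bigvee_{\phi}$ computes $\partial(\tp(\bar x),\tp(\bar z))$ and the $\bigwedge_{\psi}$ says that some $\psi\in F_1$ has $[\psi<1]$ of small $\partial$-diameter around $\tp(\bar z)$. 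Adjoining this single sentence to $F$ gives a countable $F'$. Any Polish $N\models\Th_{F'}(M)$ is then an $F$-atomic model of $\Th_F(M)$, and the conclusion follows immediately from the uniqueness of atomic models; the approximate back-and-forth with summable errors that you sketch is never needed.
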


\begin{proof}
	By the uniqueness of atomic models, it suffices to find an $\IL(L)$ sentence that expresses that the model is $F$-atomic. We claim that the following works:
	\begin{equation}
		\label{eq:atomic}
		\bigvee_n \sup_{z = (z_1, \ldots, z_n)} \bigwedge_{\psi \in F_1} \sup_x \big( (1 \dotdiv\psi(x)) \wedge   \bigvee_{\phi \in F_1} |\phi(x) - \phi(z)|  \big) = 0.
	\end{equation}

	We will check that \eqref{eq:atomic} holds in a structure $M$ iff for every $n$ and every $\bar{c} \in M^n$, $\tp (\bar{c})$ is isolated, i.e., by \cite[Lemma 7.4]{BeDoNiTs}, that for every $\delta > 0$, there exists $\psi \in F$ such that $[\psi < 1] \subseteq B_\delta(\tp (\bar{c}))$ (calculated in $S_n(\Th_F(M))$). Put $T = \Th_F(M)$. The ``if'' direction is clear; we check the converse.
	
	Suppose \eqref{eq:atomic} holds in $M$, and fix $n \in \NN$, $\bar{c} \in M^n$ and $\delta < 1/2$. Let $\psi$ be such that the value of the remaining formula is less than $\delta$. We will show that $[\psi < 1/2] \subseteq B_\delta(\tp (\bar{c}))$, i.e., for all $q \in S_n(T)$
	\begin{equation}
		\label{eq:atomic-categorical-1}
		\psi(q) < 1/2 \mbox{ implies } \partial(q, \tp (\bar{c})) \leq \delta.
	\end{equation}
	As the set of $q \in S_n(T)$ that satisfy \eqref{eq:atomic-categorical-1} is $\tau$-closed and the set of types realized in $M$ is $\tau$-dense, it suffices to check \eqref{eq:atomic-categorical-1} for all $q \in \Theta_n(M)$. Let $\bar{a} \in M^n$, $q = \tp (\bar{a})$ and suppose that $\psi^M(\bar{a}) < 1/2$. Then
	
	\begin{equation*}
		\partial(\tp (\bar{a}), \tp (\bar{c})) = \sup_{\phi \in F_1} |\phi(\bar{a}) - \phi(\bar{c})| \leq \delta,
	\end{equation*}
	which finishes the proof.
\end{proof}
 
\begin{lemma}
	\label{le:ReductiontoPi2}
	Let $F$ be a fragment, and let $T$ be an $F$-theory all of whose models are locally compact. Suppose that $M \in \bPi^0_{\alpha+2}(t_F)$ for some $M \in \Mod(T)$, $\alpha \geq 1$. There is a fragment $F_M \supseteq F$ such that $[M] \in \bPi^0_{2}(t_{F_M})$, and $\rk_F(F_M)=\alpha$. 
\end{lemma}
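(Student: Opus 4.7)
The plan is to apply Corollary~\ref{co:IsoAEAlpha} to obtain an $(\alpha+1)$-AE family $P_M$ that characterizes $[M]$, and then construct $F_M$ by adjoining to $F$ formulas of $F$-rank at most $\alpha$ which encode the realization of the ``inner'' sub-families of $P_M$. Once this is done, $P_M$ effectively becomes a $1$-AE family whose constituent formulas lie in $F_M$, and the same topological calculation as in the base case of the proof of Theorem~\ref{th:AEfamilies}, carried out in the finer topology $t_{F_M}$, yields $[M] \in \bPi^0_2(t_{F_M})$.

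Concretely, since $\bPi^0_{\alpha+2}(t_F) = \bPi^0_{1+(\alpha+1)}(t_F)$, Corollary~\ref{co:IsoAEAlpha} produces an $(\alpha+1)$-AE family $P_M = \{p_{k,l}(\bar{x}_{k,l}) : k,l \in \NN\}$ with $[M] = \{N : N \text{ models } P_M\}$; by definition, each $p_{k,l}$ has AE rank strictly less than $\alpha$. I would then prove by induction on this rank that, for every such sub-family $q(\bar{y})$, there exists a formula $\psi_q(\bar{y})$ of $F$-rank at most $\alpha$ such that in any locally compact $N \models T$, a tuple $\bar{c}$ realizes $q$ in $N$ iff $\psi_q^N(\bar{c})=0$. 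The base step takes $\psi_q = \bigwedge_m \phi_m$ for a $0$-AE family $q = \{\phi_m\}$, which has rank $1$. The inductive step translates each layer ``$\forall \bar{b}\, \forall v\, \forall k\, \exists \bar{c}'\, \exists l\,(\bar{c}' \text{ realizes } p'_{k,l})$'' into nested $\sup$'s, $\inf$'s, $\bigvee$'s, and $\bigwedge$'s over the inductively defined formulas $\psi_{p'_{k,l}}$, using Lemma~\ref{l:loc-cpct-type-space} to guarantee that the logical infimum over $\bar{c}'$ is attained on the relevant compact balls.

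Taking $F_M$ to be the fragment generated by $F \cup \{\psi_{p_{k,l}} : k,l \in \NN\}$ then yields $\rk_F(F_M) \leq \alpha$, with equality arranged by including one rank-$\alpha$ formula if necessary. In $t_{F_M}$, the condition ``$N \models P_M$'' becomes the $1$-AE realization statement ``$\forall \bar{b}\, \forall v\, \forall k\, \exists \bar{c} \in B_v(\bar{b})\, \exists l : \psi_{p_{k,l}}^N(\bar{c}) = 0$'', and the Vaught-transform topological argument from the $\alpha=1$ case of the proof of Theorem~\ref{th:AEfamilies}, applied in $t_{F_M}$, shows that this condition carves out a $\bPi^0_2(t_{F_M})$ subset of $\Mod(L)$ coinciding with $[M]$.

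The main obstacle is the inductive step of the encoding, specifically the treatment of the discrete existential ``$\exists l$'' over sub-families $p'_{k,l}$ of potentially different arities. A naive substitution of ``$\exists l : \psi_l(\bar{c}) = 0$'' by ``$\bigwedge_l \psi_l(\bar{c}) = 0$'' is faithful only up to ``$\inf_l = 0$'', which is genuinely weaker even on compact sets. To bridge this gap one must exploit the precise origin of the $\psi_{p'_{k,l}}$ from closed $\bPi^0_1(t_F)$-sets (as in the proof of Corollary~\ref{co:IsoAEAlpha}), together with local compactness of the tuples $\bar{c}$ involved, so that witnesses for the discrete existential can be recovered from witnesses for the infimum; alternatively, one can run the construction in parallel with the $T^\alpha$-hierarchy of Theorem~\ref{th:isoTalpha}, where the discrete existentials correspond to membership in definable closed subsets of compact type-space balls and are automatically attained.
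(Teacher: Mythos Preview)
Your direct-encoding program stalls at exactly the point you flag, and neither proposed patch closes it. For patch (a): knowing that each $[\psi_{p'_{k,l}}=0]$ is $t_F$-closed and that $\cl{B^N_v(\bar b)}$ is compact does \emph{not} let you recover a witness for ``$\exists\bar c\,\exists l\,(\psi_{p'_{k,l}}(\bar c)=0)$'' from ``$\inf_{\bar c}\inf_l\psi_{p'_{k,l}}(\bar c)=0$'': the index set of $l$'s is discrete and noncompact, and one can easily have $\psi_l(\bar c_l)=1/l$ with no pair achieving $0$. There is also a prior syntactic obstruction you name but do not resolve: the arities $|\bar x_{k,l}|$ are unbounded in $l$, so $\bigwedge_l\psi_{p'_{k,l}}$ is not a formula over any finite variable tuple, and nothing guarantees a common modulus of continuity for the family $\{\psi_{p'_{k,l}}\}_l$, which the infinitary connectives require. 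So the inductive step producing a single $\psi_q$ uniform in all locally compact $N$ does not go through.

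Patch (b) is the right direction, but it is a different argument from $q\mapsto\psi_q$, and you have not carried it out; the paper's proof is exactly this. It never writes a formula expressing ``$\bar c$ realizes $q$''. Instead, for each tuple $\bar a$ in the \emph{specific} model $M$ and each $u\in\QQ^+$, it builds an $M$-dependent, $1$-Lipschitz, rank-$2$ formula $\psi_{\bar a,u}(\bar x)$ with $\psi_{\bar a,u}^N(\bar a')=0$ iff $T^1_u(\bar a')=T^1_u(\bar a)$; the infinitary connectives here range only over $1$-Lipschitz formulas vanishing at a single fixed type, so both obstacles vanish. The discrete $\exists l$ is then eliminated not by encoding it but by Proposition~\ref{pr:AET}: equality of $T^1_u$ already forces agreement on $1$-AE families. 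Replacing the innermost sub-AE-families of $P_M$ by $1$-AE families whose formulas are the $\psi_{\bar c,v}$ indexed by the actual witnesses $\bar c$ in $M$ collapses two levels of the hierarchy while raising $\rk_F$ by $2$; a rank-$1$ variant handles parity, and iteration gives $F_M$ of rank $\alpha$. The key conceptual point your sketch misses is that the witnesses to $\exists l$ are named in advance by types realized in $M$, which is precisely why $F_M$ must depend on $M$.
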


\begin{proof}
	Assume that $\alpha<\omega$.
	
	Case 1: $\alpha$ is even. Let $P_M$ be the $(\alpha-1)$-AE family as in Corollary \ref{co:IsoAEAlpha}. We will find a fragment $F_0$ with $\rk_F(F_1)=2$, and an $(\alpha-3)$-AE family $P_0$ in $F_0$ such that $N \in \Mod(T)$ models $P_0$ iff $N$ models $P_M$.
	
	Fix a tuple $\bar{a} \in \Seq$ in $M$, and $u \in \QQ^+$. For $\bar{b} \in B_{u}(\bar{a})$, $\bar{b} \in \Seq$, $\epsilon \in \QQ^+$, fix $q_{\bar{b},\epsilon}(\bar{y}) \in B_{\epsilon}(\tp(\bar{b}))$, let $L_{\bar{b},\epsilon}$ be the set of all $1$-Lipschitz formulas $\phi$ in $F$ such that $\phi(q_{\bar{b},\epsilon})=0$, and let
	$$\phi_{\bar{b},\epsilon}=\bigvee L_{\bar{b},\epsilon}.$$
	As $\phi \in L_{\bar{b},\epsilon}$ are $1$-Lipschitz, each $\phi_{\bar{b},\epsilon}$ is a $1$-Lipschitz formula in $\IL$, and a tuple $\bar{b}'$ in $N \in \Mod(T)$ realizes $q_{\bar{b},\epsilon}(\bar{y})$ iff $\phi_{\bar{b},\epsilon}^N(\bar{b}')=0$. Define $1$-Lipschitz formulas
	$$\psi^1_{\bar{a},u}(\bar{x})=\sup_{\bar{y}} [(u \dotdiv d(\bar{x},\bar{y})) \wedge \bigwedge_{\bar{b}, \epsilon} (\phi_{\bar{b},\epsilon}(\bar{y})+\epsilon)],$$
	$$\psi^2_{\bar{a},u}(\bar{x})=\bigvee_{\bar{b},\epsilon} \inf_{\bar{y}} [(d(\bar{x},\bar{y}) \dotdiv u) \vee (\phi_{\bar{b},\epsilon}(\bar{y})+\epsilon)],$$
	$$\psi_{\bar{a},u}=\psi^1_{\bar{a},u} \vee \psi^2_{\bar{a},u}.$$
	
	Fix $N \in \Mod(T)$, and tuple $\bar{a}'$ in $N$. Clearly, if $T^1_{u}(\bar{a})=T^1_{u}(\bar{a}')$, then $\psi^N_{\bar{a},u}(\bar{a}')=0$. On the other hand, if $(\psi^1_{\bar{a},u})^N(\bar{a}')=0$, then for every $\bar{b}' \in B_{u}(\bar{a}')$, and $\epsilon>0$, there is $\bar{b}$ such that $\phi_{\bar{b},\epsilon}^N(\bar{b}')<\epsilon$. And if $(\psi^2_{\bar{a},u})^N(\bar{a}')=0$, then for every $\bar{b}$, and $\epsilon>0$, there is $\bar{b}' \in B_{u}(\bar{a}')$ such that $\phi_{\bar{b},\epsilon}^N(\bar{b}')<\epsilon$.  Moreover, $$\partial(\tp^N(\bar{b}'),q_{\bar{b},\epsilon})<\epsilon,$$ hence $$\partial(\tp^N(\bar{b}'),\tp^M(\bar{b}))<2\epsilon.$$ By local compactness of $N$, it follows that $T^1_{u}(\bar{a})=T^1_{u}(\bar{a}')$. Thus, $$T^1_{u}(\bar{a})=T^1_{u}(\bar{a}') \mbox{ iff } \psi^N_{\bar{a},u}(\bar{a}')=0$$ for every tuple $\bar{a}'$ in $N \in \Mod(T)$. 
	
	Let $F_0$ be the fragment generated by $F$ and $\psi_{\bar{a},u}(\bar{x})$, $\bar{a} \in \NN^{<\NN}$, $u \in \QQ^+$. Fix a bijection $\langle\cdot,\cdot\rangle:\NN \times \QQ^+ \rightarrow \NN$. We construct an $(\alpha-3)$-family $P_0(\bar{x})$, by replacing every $3$-AE family $Q(\bar{x})=\{q_{k,l}(\bar{x}_{k,l})\}$ appearing in $P_M(\bar{x})$ with a $1$-AE family $Q'(\bar{x})=\{q'_{\langle k,v\rangle,l}(\bar{x}_{\langle k,v\rangle,l})\}$, where, for any fixed $k$, and $v \in \QQ^+$, $\{q'_{\langle k,v\rangle,l}\}$ enumerates all $\psi_{\bar{c},v}$ that come from $\bar{c}$ in $M$ and $v>0$ witnessing realizations of $Q(\bar{x})$. Obviously, $M$ models $P_0$. And if $N \in \Mod(T)$ models $P_M$, it is isomorphic with $M$, hence models $P_0$. On the other hand, if $\bar{c}'$ realizes some $q'_{\langle k,v\rangle,l}(\bar{x}_{\langle k,v\rangle,l})$, there is $\bar{c}$ in $M$ witnessing a realization of $Q(\bar{x})$, and such that $T^1_{v}(\bar{c})=T^1_{v}(\bar{c}')$. By Proposition \ref{pr:AET}, $\bar{c}'$ realizes some $q_{k,l'}(\bar{x})$. And $[M] \in \bPi^0_{\alpha-2}(t_{F_0})$, since $[M]$ is characterized by an $(\alpha-3)$-family. Finally, in order to get the required $F_M$, we iterate the above construction sufficiently many times.
	
	Case 2: $\alpha$ is odd. Consider $F_1$ generated by $F$ and formulas $\phi_{\bar{c},\epsilon}$ as above for $\bar{c} \in \Seq$, $\epsilon \in \QQ^+$. Clearly, $\rk_F(F_1)=1$. We construct an $(\alpha-2)$-AE family $P_1(\bar{x}$) in $F_1$ by replacing every $2$-AE family $Q(\bar{x})=\{q_{k,l}(\bar{x}_{k,l})\}$ appearing in $P(\bar{x})$ with $Q'(\bar{x})=\{q'_{\langle k,v\rangle,l}(\bar{x}_{\langle k,v\rangle,l})\}$, where for any fixed $k$, and $v \in \QQ^+$, $\{q'_{\langle k,v\rangle,l}\}$ enumerates all $\phi_{\bar{c},v}$ that come from $\bar{c}$ in $M$ and $v>0$ witnessing realizations of $Q(\bar{x})$. As before, $N \in \Mod(T)$ models $P_1$ iff $N$ models $P_M$, and $[M] \in \bPi^0_{\alpha-1}(t_{F_0})$. In this way, Case 2 can be reduced to Case 1.
	
	Finally, an easy induction using the above arguments reduces the case  $\alpha \geq \omega$ to the case $\alpha<\omega$.
\end{proof}

\begin{theorem}
	\label{th:Potentially}
	Let $F$ be a fragment, and let $T$ be an $F$-theory all of whose models are locally compact. Suppose that $\cong_{T}$ is potentially $\bPi^0_{\alpha+2}$, where $\alpha \geq 1$. Then $\cong_{T}$ is Borel reducible to $=_{\alpha+1}$.
\end{theorem}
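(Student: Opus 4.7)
The plan is to combine the complexity reduction supplied by Corollary \ref{co:Sigma0alpha} and Lemma \ref{le:ReductiontoPi2} with the type-space invariant of Theorem \ref{th:isoTalpha}, package the result as a single hereditarily countable object, and carry out a careful level count to land in $\PP^{\alpha+1}(\NN)$.

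First, Corollary \ref{co:Sigma0alpha} supplies a countable fragment $F_0$ with $[M]\in\bPi^0_{\alpha+2}(t_{F_0})$ for every $M\in\Mod(T)$. For each $M$, Lemma \ref{le:ReductiontoPi2} then produces a fragment $F_M\supseteq F_0$ with $\rk_{F_0}(F_M)=\alpha$ and $[M]\in\bPi^0_2(t_{F_M})$; inspecting that proof, one may (and I will) take the auxiliary types $q_{\bar b,\epsilon}$ to be $\tp^M(\bar b)$ itself, which ensures that the generating formulas $\phi_{\bar b,\epsilon}$ depend only on the set $\Theta_n(M)$ of types realized in $M$, so that $F_M$, viewed as an unordered set of formulas, depends only on $[M]$. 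Applying Theorem \ref{th:isoTalpha} in the fragment $F_M$ with ordinal $1$ then gives $[M]=\{N\in\Mod(T):T^1_{F_M}(N)=T^1_{F_M}(M)\}$.

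Next, define the invariant $\Phi(M)$ as the pair $(F_M,T^1_{F_M}(M))$, packaged (together with a canonical enumeration of $F_M$ by $\NN$) as a single hereditarily countable set. Every formula of $F_M$ has $F_0$-rank at most $\alpha$, hence admits a natural code in $\PP^\alpha(\NN)$, so $F_M$ itself sits in $\PP^{\alpha+1}(\NN)$. The key level observation is that, once the enumeration of $F_M$ is part of $\Phi(M)$, every $\tau_{F_M}$-closed subset of $S_n^{F_M}(T)$ can be coded by the set of natural-number indices of those basic opens $[\phi<r]$ it avoids, i.e., by an element of $\PP^1(\NN)$; consequently $T^1_{F_M}(M)\in\PP^2(\NN)$. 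Bundling the two components and using $\PP^2(\NN)\subseteq\PP^{\alpha+1}(\NN)$ for $\alpha\geq 1$ gives $\Phi(M)\in\PP^{\alpha+1}(\NN)$.

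Finally, $M\mapsto\Phi(M)$ is an isomorphism invariant by the canonical construction of $F_M$ together with Remark \ref{re:CongSameT}, and is complete by Theorem \ref{th:isoTalpha}. For Borel measurability, I would use the Borel maps $M\mapsto\rho_M(\bar a)$ and $(M,\bar a,r)\mapsto B_{\leq r}(\tp^M(\bar a))\in\mathcal{K}(S_n(T))$ from \cite[Lemma 6.4]{HaMaTs}, together with Borel evaluation of formulas, to verify that both the syntactic construction of $F_M$ from $M$ and the computation of the $\tau_{F_M}$-closed sets $T^0_{U,\epsilon}(\bar b)$ are Borel in $M$. The main obstacle I anticipate is the rank-level bookkeeping in the preceding paragraph: coding a $\tau_{F_M}$-closed set \emph{intrinsically} as a set of pairs of rank-$\alpha$ formulas and rationals would place it in $\PP^{\alpha+1}(\NN)$ and push $T^1_{F_M}(M)$ up to $\PP^{\alpha+2}(\NN)$, missing the target by one. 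The whole argument hinges on keeping the enumeration of $F_M$ explicitly inside $\Phi(M)$ and coding the closed-set data relative to that enumeration, so that the heavy rank-$\alpha$ information is confined to the fragment component while the $T^1$-component collapses to $\PP^2(\NN)$.
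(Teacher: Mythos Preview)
Your approach and the paper's agree on the first move---Corollary \ref{co:Sigma0alpha} followed by Lemma \ref{le:ReductiontoPi2} to obtain $F_M$ of $F_0$-rank $\alpha$ with $[M]\in\bPi^0_2(t_{F_M})$---but diverge at the final step. You invoke Theorem \ref{th:isoTalpha} to take $T^1_{F_M}(M)$ as the complete invariant; the paper instead uses \cite[Theorem 4.3]{HaMaTs} to see that $[M]\in\bPi^0_2(t_{F_M})$ forces $M$ to be $F_M$-atomic, then applies Proposition \ref{pr:atomic-to-categorical} to enlarge to $F'_M$ in which $\Th_{F'_M}(M)$ is $\aleph_0$-categorical, and takes the pair $(F'_M,\Th_{F'_M}(M))$ as the invariant. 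The payoff of the paper's route is exactly the level count you worry about: a theory is already just a countable set of (sentence, value) pairs, one level above the formulas and with no further nesting, so it lands in $\PP^{\alpha+1}(\NN)$ without any enumeration device.

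The enumeration trick is where your proposal is thin. For $\Phi(M)=\Phi(N)$ to follow from $M\cong N$, the enumeration $e_M$ you package into $\Phi(M)$ must itself be isomorphism-invariant: it has to depend only on the type-space data of $M$, not on the particular presentation in $\Mod(T)$. With your choice $q_{\bar b,\epsilon}=\tp^M(\bar b)$ the generating formulas of $F_M$ at each stage are indexed by realized types, i.e., by the closed sets $\Theta_n(M)$ inside fixed Polish type spaces, so one can in principle build such an enumeration by applying Kuratowski--Ryll-Nardzewski to these closed sets stage by stage; but this has to be said explicitly, and the word ``canonical'' alone does not justify it. A second point you gloss over is that $T^1_{F_M}(M)$ lives in the $F_M$-type spaces, whose logic topology (and hence your basis $\UUU$) varies with $M$; verifying Borel measurability of $M\mapsto T^1_{F_M}(M)$ therefore means threading a moving family of type spaces through the construction, which is more delicate than the paper's computation of $\Th_{F'_M}(M)$ as a list of sentence-values $\phi^M$ for $\phi$ ranging over a Borel-in-$M$ set of sentences.
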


\begin{proof}
Observe that for $M \in \Mod(T)$, the fragment $F_M$ given by Lemma \ref{le:ReductiontoPi2} can be coded as an element of $\mathcal{P}^{\alpha}(\NN)$. First, it is not hard to see that, with an aid of the Kuratowski--Ryll-Nardzewski theorem, selecting  the types $q_{\bar{b},u}$ in the proof of the lemma can be arranged in a Borel and isomorphism invariant way. Moreover, the fragments $F_0$ and $F_1$ are also constructively specified, given $q_{\bar{b},u}$'s, so it is somewhat tedious but completely standard to verify that the assignment $M \mapsto F_M$ can be done in a Borel and isomorphism invariant manner. 
	
Now, by \cite[Theorem 4.3]{HaMaTs}, each $M$ is an $F_M$-atomic model of $\Th_{F_M}(M)$, so it is $\aleph_0$-categorical in the theory $\Th_{F'_M}(M)$, where $F'_M$ is the fragment given by Proposition \ref{pr:atomic-to-categorical}. As $\Th_{F'_M}(M)$ can be regarded as an element of $\mathcal{P}^{\alpha+1}(\NN)$, the assignment $M \mapsto (F'_M,\Th_{F'_M}(M))$ can be coded as a Borel mapping $f:\Mod(T) \rightarrow \mathcal{P}^{\alpha+1}(\NN)$ reducing $\cong_{T}$ to $=_{\alpha+1}$.
\end{proof}


\section{Countable structures and approximations of the Hjorth-isomorphism game}

\paragraph{\textbf{Countable structures}}
Classical countable structures can be recovered in the setting of Polish metric structures by imposing the requirement that $d(x,y)=1$ for $x \neq y$, which can be axiomatized by the condition
\[ \sup_{x,y}(d(x,y) \wedge (d(x,y) \dotdiv 1) \wedge (1 \dotdiv d(x,y)))=0; \]
in the same way one can make sure that predicates take only values $0$ or $1$. For such metric $d$, quantifiers $\sup_x$ and $\inf_y$ behave as $\forall x$ and $\exists x$, so $\FL$, $\IL$, and topologies defined by fragments are as in the classical setting. In particular, $\Mod(L)$ is the space of all structures in signature $L$ and with universe $\NN$, and, for $u<1$, $d(\bar{a},\bar{b})<u$ iff $\bar{a} \subseteq \bar{b}$ or $\bar{b} \subseteq \bar{a}$, so it suffices to consider only AE families $P(\bar{x})$ with $u_P=1$. Observe that then a tuple $\bar{a}$ in $M$ realizes a $(\beta+n)$-AE family $P(\bar{x})=\{p_{k,l}(\bar{x}_{k,l})\}$, $1 \leq n< \omega$, iff
\[  \forall  \bar{b} \supseteq \bar{a} \forall k  \exists \bar{c} \supseteq \bar{b} \exists l  (\bar{c} \mbox{ realizes } p_{k,l}(\bar{x}_{k,l}) \mbox{ in } M ).  \] 
As a matter of fact, Theorem \ref{th:isoTalpha} also takes a more transparent form. Fix a fragment $F$ in signature $L$. For $M \in \Mod(L)$, and $\bar{a} \in \Seq$, we define $\tp^0(\bar{a})=\tp(\bar{a})$, and, for $\alpha>0$,
\[ \tp^\alpha(\bar{a})=\{ \tp^\beta(\bar{b}) : \beta<\alpha, \, \bar{b} \in \Seq, \, \bar{a} \subseteq \bar{b}  \}.  \]
We also put $\Th^\alpha(M)=\tp^\alpha(\emptyset)$. If $M$ or $F$ is not clear from the context, we may explicitly specify it by writing $\tp^\alpha_M(\bar{a})$, $\tp^\alpha_F(\bar{a})$ or $\Th^\alpha_F(M)$. Clearly, $\Th^0(M)=\Th(M)$, $\Th^1(M)$ is the collection of all $F$-types realized in $M$, $\Th^2(M)$ is the collection of all $F$-types of structures $(M,\bar{a})$, $\bar{a} \in \Seq$, etc.

\begin{theorem}
	\label{th:isoTheoryalpha}
	Let $F$ be a fragment in signature $L$. Suppose that $[M] \in \bPi^0_{1+\alpha}(t_F)$, $\alpha \geq 1$, for some $M \in \Mod(L)$. Then $$[M]=\{ N \in \Mod(L): \Th^{\alpha}_F(N)=\Th^{\alpha}_F(M)\}.$$
\end{theorem}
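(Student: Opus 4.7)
The theorem is the discrete analog of Theorem \ref{th:isoTalpha}: in the classical setting, the type-space apparatus $T^\alpha_{U,\epsilon}(\bar{a})$ collapses onto the iterated $F$-types $\tp^\alpha(\bar{a})$, since $S_n(T)$ is discrete under $\partial$ and every $(U,\epsilon)$-good neighbourhood is just a singleton type. The forward direction $N \cong M \Rightarrow \Th^\alpha_F(N)=\Th^\alpha_F(M)$ is immediate. For the converse, apply Corollary \ref{co:IsoAEAlpha} to produce an $\alpha$-AE family $P_M$ with $[M]=\{N : N\models P_M\}$. As noted at the start of this section, in the countable setting we may take $u_P=1$ throughout, so realization of an $\alpha$-AE family by $\bar{a}$ amounts to the quantifier pattern $\forall\bar{b}\supseteq\bar{a}\,\forall k\,\exists\bar{c}\supseteq\bar{b}\,\exists l\,\ldots$. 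It therefore suffices to show that $\Th^\alpha_F(N)=\Th^\alpha_F(M)$ forces $N\models P_M$.

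The workhorse is the discrete analog of Proposition \ref{pr:AET}, proved by induction on $\alpha$: if $\tp^\alpha_M(\bar{a})=\tp^\alpha_N(\bar{a}')$, then every $\alpha$-AE family realized by $\bar{a}$ in $M$ is also realized by $\bar{a}'$ in $N$. At the base $\alpha=1$, the family consists of formulas $p_{k,l}\in F$; given $\bar{b}'\supseteq\bar{a}'$ and $k$, use $\tp^0(\bar{b}')\in\tp^1(\bar{a}')=\tp^1(\bar{a})$ to pick $\bar{b}\supseteq\bar{a}$ with $\tp^0(\bar{b})=\tp^0(\bar{b}')$, extract from the realization in $M$ some $l$ and $\bar{c}\supseteq\bar{b}$ with $p^M_{k,l}(\bar{c})=0$, and transfer back to $N$ via the $F$-formula $\inf_{\bar{z}}p_{k,l}(\bar{x},\bar{z})$ (which lies in $F$ by closure under finite quantification and substitution). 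For $\alpha>1$, repeat the move one level higher: $\tp^{\alpha-1}(\bar{b}')\in\tp^\alpha(\bar{a}')=\tp^\alpha(\bar{a})$ yields $\bar{b}\supseteq\bar{a}$ with $\tp^{\alpha-1}(\bar{b})=\tp^{\alpha-1}(\bar{b}')$, some $\bar{c}\supseteq\bar{b}$ and $l$ realize $p_{k,l}$ in $M$, and then $\tp^{\alpha-2}(\bar{c})\in\tp^{\alpha-1}(\bar{b})=\tp^{\alpha-1}(\bar{b}')$ together with the inductive hypothesis produces the required $\bar{c}'\supseteq\bar{b}'$ in $N$. Limit ordinals are handled componentwise by passing to each sub-family separately. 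Applying the lemma with $\bar{a}=\bar{a}'=\emptyset$ and $P=P_M$ finishes the proof.

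The main delicate point, and likely the only real obstacle, is rank identification: the set-equation $\tp^{\alpha-1}(\bar{b}')\in\tp^\alpha(\bar{a})$ formally only asserts $\tp^{\alpha-1}(\bar{b}')=\tp^\gamma(\bar{b})$ for some $\gamma<\alpha$ and $\bar{b}\supseteq\bar{a}$, since the definition of $\tp^\alpha(\bar{a})=\{\tp^\gamma(\bar{b}):\gamma<\alpha,\bar{b}\supseteq\bar{a}\}$ is an unordered collection. The cleanest remedy is to strengthen the induction to a back-and-forth principle: $\tp^\alpha(\bar{a})=\tp^\alpha(\bar{a}')$ implies that for every $\beta<\alpha$ and every $\bar{b}\supseteq\bar{a}$ there is $\bar{b}'\supseteq\bar{a}'$ with $\tp^\beta(\bar{b})=\tp^\beta(\bar{b}')$, and symmetrically. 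This is maintained along with the realization transfer by a parallel induction on the pair $(\alpha,\beta)$. Once it is in place, the whole argument is strictly simpler than that of Proposition \ref{pr:AET}, since no local-compactness or $\partial$-topology manipulation is required, and the theorem follows directly from the lemma and Corollary \ref{co:IsoAEAlpha}.
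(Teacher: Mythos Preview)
Your proof is correct and follows essentially the same route as the paper: invoke Corollary~\ref{co:IsoAEAlpha} to obtain an $\alpha$-AE family $P_M$ characterizing $[M]$, then prove by induction on $\alpha$ that $\Th^\alpha_F(M)=\Th^\alpha_F(N)$ (more generally $\tp^\alpha(\bar a)=\tp^\alpha(\bar a')$) forces realization of the same $\alpha$-AE families, transferring existential witnesses via the $F$-formula $\inf_{\bar z}p_{k,l}$. The paper's version is terser and does not explicitly address the rank-identification issue you flag; your back-and-forth strengthening is a clean way to make this rigorous, though one can alternatively observe that the set-theoretic rank of $\tp^\beta(\bar b)$ determines $\beta$.
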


\begin{proof}
We show that $\Th^\alpha_F(M)=\Th^\alpha_F(N)$ implies that $M$ and $N$ model the same $\alpha$-AE families $P(\emptyset)$, and apply Corollary \ref{co:IsoAEAlpha}. For $\alpha=1$, suppose that $M$ and $N$ realize the same types, and let $P(\emptyset)=\{p_{k,l}(\bar{x}_{k,l})\}$ be a $1$-AE family. Let $\bar{b}$, $\bar{b}'$ be tuples in $M$, $N$, respectively, such that $\tp(\bar{b})=\tp(\bar{b}')$. Then, for every $k$ and $l$, there is $\bar{c}$ such that the formula $p_{k,l}(\bar{b},\bar{c})$ holds in $M$ iff there is $\bar{c}'$ such that $p_{k,l}(\bar{b}',\bar{c}')$ holds in $N$. For $\alpha=2$ the argument is analogous, and for $\alpha>2$ this is an easy induction.
\end{proof}

\paragraph{\textbf{Approximations of the Hjorth-isomorphism game}}
A \emph{Polish $G$-space} $X$ is a continuous action of a Polish group $G$ on a Polish space $X$, $E_X$ denotes the orbit equivalence relation induced by $X$, and $[x]$ is the equivalence class of $x \in X$. In \cite{LuPa}, a game-theoretic approach to Hjorth's theory of turbulence has been developed, giving rise to an interesting sufficient condition for orbit equivalence relations not to be classifiable by countable structures. In this section, we introduce a hierarchy of games $\Appr_\alpha(x,y)$, $\alpha<\omega_1$, that are finer and finer approximations of the Hjorth-isomorphism game $\Iso(x,y)$ from \cite{LuPa}. As it turns out, winning strategies in these games, played for the logic $\Sym(\NN)$-spaces $\Mod(L)$, are related to families $\Th^\alpha(M)$. Moreover, when put together, they also can be used to rule out classifiability by countable structures. 

For a Polish $G$-space $X$, $x,y \in X$, a collection $\VV$ of open neighborhoods of $1$ in $G$, an open neighborhood $V$ of $1$ in $G$, and open $U \subseteq X$, we define games $\Appr_\alpha(x,y,\VV,V,U)$, $\alpha<\omega_1$. Fix $\alpha<\omega_1$, set $x_0=x$, $y_0=y$, $V^y_0=V$, $U^y_0=U$, and let Odd and Eve play as follows.

(1) In the first turn, Odd either sets $V^x_1=V^y_0$, $\alpha_1=\alpha$ or plays a new $V^x_1 \in \VV$, and $\alpha_1<\alpha$. Then he plays an open neighborhood $U^x_0$ of $x_0$. Eve replies with $g^y_0 \in G$.

(2) In the second turn, Odd either sets $V^y_1=V^x_0$, $\alpha_{2}=\alpha_1$ or plays a new $V^y_1 \in \VV$, and $\alpha_2<\alpha_1$. Then he plays an open neighborhood $U^y_1$ of $y_1$. Eve replies with $g^x_0 \in G$.

(2n+1) In the $(2n+1)$-th turn, $n>0$, Odd either sets $V^x_n=V^y_{n}$, $\alpha_{2n+1}=\alpha_{2n}$ or plays a new $V^x_n \in \VV$, and $\alpha_{2n+1}<\alpha_{2n}$. Then he plays an open neighborhood $U^x_{n}$ of $x_{n}= g^x_{n-1}.x_{n-1}$. Eve replies with $g^y_{n} \in G$.

(2n) In the $(2n)$-th turn, $n>1$, Odd either sets $V^y_n=V^x_{n-1}$, $\alpha_{2n}=\alpha_{2n-1}$ or plays a new  $V^y_n \in \VV$, and $\alpha_{2n}<\alpha_{2n-1}$. Then he plays an open neighborhood $U^y_{n}$ of $y_{n}= g^y_{n-1}.y_{n-1}$. Eve replies with $g^x_{n-1} \in G$.

The game proceeds in this way, producing elements $V^x_n$, $V^y_n$, $x_n$, $y_n$, $g^x_n$, $g^y_n$, $U^x_n$ and $U^y_n$, $n \in \NN$. Eve wins if, for every $n \geq 0$,

\begin{itemize}
	\item $y_{n+1} \in \overline{U^x_n}$ and $x_n \in \overline{U^y_n}$,
	\item $g^y_n= h_k \ldots h_0$ for some $k \geq 0$ and $h_0, \ldots, h_k \in V^y_n$ such that $h_i \ldots h_0.y_n \in U^y_n$ for $i \leq k$,
	\item $g^x_n= h_k \ldots h_0$ for some $k \geq 0$ and $h_0, \ldots, h_k \in V^x_n$ such that $h_i \ldots h_0.x_n \in U^x_n$ for $i \leq k$.
\end{itemize}

We write shortly $\Appr_\alpha (x,y)$ for $\Appr_\alpha (x,y,\UUU,G, X)$, where $\UUU$ is the collection of all open neighborhoods of $1$ in $G$. We write $x \sim_{\alpha} y$ if Eve has a winning strategy in $\Appr_\alpha(x,y)$.

Note that if the conditions regulating the choice of $\alpha$ in $\Appr_\alpha(x,y)$ are dropped, i.e., Odd can play a new $V^x_n$ (or $V^y_y$) without having to select some $\alpha_{2n+1}<\alpha_{2n}$ (or $\alpha_{2n}<\alpha_{2n-1}$), the resulting game is the Hjorth-isomorphism game $\Iso(x,y)$ defined in \cite{LuPa}. In other words, the games $\Appr_\alpha(x,y)$, $\alpha<\omega_1$, form a hierarchy of finer and finer approximations of $\Iso(x,y)$.

As the group $\Sym(\NN)$ of all permutations of natural numbers has a neighborhood basis at $1$ consisting of subgroups, we have the following:

\begin{remark}
	Suppose that $G \leq\Sym(\NN)$.
	\label{re:GameForSI}
	\begin{enumerate}
		\item In terms of winning strategies, the requirements for Eve in $\Appr_\alpha(x,y)$ reduce to
		\begin{itemize}
			\item $y_{n+1} \in \overline{U^x_n}$ and $x_n \in \overline{U^y_n}$,
			\item $g^y_{n} \in V^y_n$,
			\item $g^x_{n} \in V^x_n$.
		\end{itemize}
	\item If $V \leq G$, $\Appr_0(x,y,\{V\},G,X)$ is $\Appr_{G,V}(x,y)$ defined in \cite{KeMaPaZi}.
	\end{enumerate}
	
\end{remark}

\begin{proposition}
	Let $X$ be a $G$-space, and let $\alpha<\omega_1$. Then $x \in [y]$ implies $x \sim_{\alpha} y$ for $x,y \in X$, and $\sim_{\alpha}$ is an equivalence relation.
\end{proposition}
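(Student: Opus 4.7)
The first claim reduces to an elementary explicit strategy. Given $g \in G$ with $g.y = x$, Eve wins $\Appr_\alpha(x,y)$ by playing $g^y_0 = g$ in the opening turn (the path condition is vacuous because $V^y_0 = G$ and $U^y_0 = X$ in the abbreviated game) and the identity element of $G$ in every subsequent turn. This forces $y_1 = g.y = x_0$ and then $x_n = y_n = x_0$ for all $n \geq 1$; since each of Odd's open sets contains $x_0$, every closure condition holds, and the identity trivially satisfies each path condition. Reflexivity of $\sim_\alpha$ is then immediate from $x \in [x]$.

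For symmetry, given a winning strategy $\sigma$ of Eve in $\Appr_\alpha(x,y)$, I plan to build one in $\Appr_\alpha(y,x)$ by simulating $\sigma$ with a one-turn offset. Eve maintains a private copy of $\Appr_\alpha(x,y)$ whose turn $1$ is a fictitious move: the simulated Odd plays trivial data ($V^x_1 = V$, $\alpha_1 = \alpha$, $U^x_0 = X$) and Eve responds with $1$, so internally $y_1 = y$. Each subsequent external turn of $\Appr_\alpha(y,x)$ is then dispatched as the next internal turn, with Odd's external choices relayed into the internal game under the side-swap identifying each argument with its counterpart; the external winning conditions coincide with the internal ones, so $\sigma$ supplies the needed responses.

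For transitivity, given winning strategies $\sigma$ for $\Appr_\alpha(x,y)$ and $\tau$ for $\Appr_\alpha(y,z)$, the plan is to run both as internal simulations glued at $y$. In each external turn of $\Appr_\alpha(x,z)$, Eve relays Odd's neighborhood into whichever internal game touches the matching external endpoint, uses continuity of the $G$-action to shrink the target into a neighborhood of $y$ usable by the other strategy, and plays externally the product of the two resulting group elements. Ordinal decrements are propagated into both internal games simultaneously, which is legitimate because all three clocks start at $\alpha$ and only decrease in response to external $V$-refinements.

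The main obstacle I expect is the path condition in transitivity: Eve's composite external move must be written as $h_k \cdots h_0$ with each $h_i$ in the external $V^b_n$ and every partial product keeping $b_n$ inside the external $U^b_n$. The $\sigma$-path and the $\tau$-path concatenate naturally, but they meet at an intermediate translate of $y$ which must itself lie inside $U^b_n$; arranging this through a coordinated choice of internal target neighborhoods across turns, compatibly with Odd's future $V$- and $U$-refinements, is the delicate bookkeeping. The ordinal-tracking in the symmetry step will also need care, but should be routine given the lockstep decrement pattern.
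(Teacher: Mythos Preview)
Your proposal is correct and follows essentially the same approach as the paper. The paper is in fact terser than you: it declares the first claim and symmetry ``obvious'' without further comment, and for transitivity it sketches only the first two turns of the composite strategy (gluing the two internal games at $y$, exactly as you do) before concluding with ``proceeding in this way.'' Your explicit offset argument for symmetry and your identification of the path-condition bookkeeping as the delicate point in transitivity are accurate observations that the paper simply does not spell out; neither account fills in substantially more detail than the other, and the underlying idea is the same.
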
 

\begin{proof}
	
	The first statement, and symmetry of $\sim_{\alpha}$, are obvious. 
	
	We prove transitivity for $\alpha=1$. Suppose that $x \sim_{1} y$ and $y \sim_{1} z$. We show that $z \sim_{\alpha} x$, which, by symmetry of $\sim_{\alpha}$, implies that $x \sim_{\alpha} z$.  In the first turn, Odd plays $U^z_0 \subseteq X$ and $V^z_1 \subseteq G$. Applying her winning strategy for $\Appr_1(z,y)$, Eve can find $h^y_0$ such that $h^y_0.y \in U^z_0$. Then, applying her winning strategy for $\Appr_1(y,x)$, for a neighborhood $W$ of $y$ with $h^y_0.W \subseteq U^z_0$, she can find $h^x_0$ such that $h^x_0.x \in W$, i.e., $h^y_0h^x_0.x \subseteq U^z_0$. She plays $g^x_0=h^y_0h^x_0$. In the second turn, Odd plays $U^x_1 \subseteq X$ and $V^x_1 \subseteq G$. Eve first applies her winning strategy for $\Appr_1(y,x)$ to find $(h^y_0)'$ such that $(h^y_0)'.y \in W$ for a neighborhood $W$ of $h^x_0.x$ such that $h^y_0.W \subseteq U^x_1$. Then she applies her winning strategy for $\Appr_1(y,z)$ to find $h^z_0$ such that $h^z_0.z \in W'$ for a neighborhood $W'$ of $h^y_0.y$ such that $h^y_0(h^y_0)'(h^y_0)^{-1}.W' \subseteq W$. Clearly, $h^y_0(h^y_0)'(h^y_0)^{-1}.z \in U^x_1$, so Eve plays $g^z_0=h^y_0(h^y_0)'(h^y_0)^{-1}$ in $\Appr_1(z,x)$. Proceeding in this way, we can construct a winning strategy for Eve for the entire game $\Appr_1(z,x)$, i.e., $z \sim_{1 } x$.
	
	The inductive step is straightforward: every game $\Appr_\alpha(x,y)$ proceeds initially as $\Appr_1(x,y)$, and then as some $\Appr_\beta(x,y)$, where $\beta<\alpha$.
\end{proof}

For a given signature $L$, the logic $\Sym(\NN)$-space $\Mod(L)$ is the logic action of $\Sym(\NN)$ on $\Mod(L)$ permuting the universe of structures $M \in \Mod(L)$. Clearly, $E_{\Mod(L)}$ is the isomorphism relation on $\Mod(L)$. As the relations $\sim_\alpha$ depend on the topology on $\Mod(L)$, for a fragment $F$, we will write $\sim_{\alpha,F}$ to denote $\sim_\alpha$ on $(\Mod(T),t_F)$, and $[M]_{\alpha,F}$ to denote equivalence classes of $\sim_{\alpha, F}$.

\begin{proposition}
	Let $F$ be a fragment in signature $L$, and let $M \in \Mod(L)$. Then $$[M]_{\alpha,F}=\{N \in \Mod(L): \Th^{\alpha}_F(N)=\Th^{\alpha}_F(M)\}.$$
\end{proposition}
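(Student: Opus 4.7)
My plan is to proceed by transfinite induction on $\alpha$, reformulating the game in back-and-forth terms. In the classical countable setting, open neighborhoods of $1$ in $\Sym(\NN)$ are precisely the pointwise stabilizers $\Sym(\NN)_{\bar a}$ of finite tuples $\bar a \in \Seq$, and basic open sets of $t_F$ around a structure $M$ are of the form $[\phi(\bar b) < r]$ with $\phi \in F$. Thus each refinement of $V^x_n$ or $V^y_n$ by Odd amounts to naming a finite tuple of indices on the $M$-side or the $N$-side, while each open set $U^x_n$, $U^y_n$ demands approximate agreement of a formula value on a fixed tuple. To make the induction go through I would strengthen the statement to a tuple-parametrized version: for any $\bar a, \bar b \in \Seq$ of equal length, Eve wins the game starting from an initial refinement identifying indices of $\bar a$ and $\bar b$, together with a basic open set recording $F$-formula values on these tuples, iff $\tp^\alpha_F(\bar a) = \tp^\alpha_F(\bar b)$; the proposition then corresponds to $\bar a = \bar b = \emptyset$.

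For the base case $\alpha = 0$, no ordinal drops are available, so the initial subgroups are preserved throughout. Eve's task reduces to keeping the current images of the two structures in one another's $t_F$-neighborhoods at every step. Since $t_F$ has a countable base of sets of the form $[\phi(\bar c) < r]$, this is equivalent to the condition that for every $\phi \in F$ the values $\phi^M(\bar a)$ and $\phi^N(\bar b)$ coincide, i.e., $\tp^0_F(\bar a) = \tp^0_F(\bar b)$: any discrepancy yields a basic open set around $M$ that no permutation fixing $\bar a$ can push $N$ into, and conversely matching values let Eve respond by essentially the identity.

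For the inductive step at a successor $\alpha + 1$, assume the result for all $\beta \leq \alpha$. For $(\Leftarrow)$: whenever Odd refines $V^x$ to $\Sym(\NN)_{\bar a \bar c}$ with ordinal drop to some $\beta \leq \alpha$, the definition of $\tp^{\alpha+1}$ supplies a matching tuple $\bar c' \supseteq \bar b$ in $N$ with $\tp^\beta_F(\bar a \bar c) = \tp^\beta_F(\bar b \bar c')$. Eve plays a permutation (in $\Sym(\NN)_{\bar b}$, acting on the $N$-side) sending the indices of $\bar c$ onto those of $\bar c'$, and thereafter follows the inductive strategy for the enriched tuples at level $\beta$. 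When Odd refines $V^y$ instead, Eve's response is symmetric; when Odd does not refine, Eve satisfies the open-set constraint using matching types exactly as in the base case. For $(\Rightarrow)$, given $\bar c \supseteq \bar a$ in $M$ and $\beta < \alpha + 1$, we have Odd refine $V^x$ to $\Sym(\NN)_{\bar c}$ with ordinal drop to $\beta$; Eve's winning response determines a corresponding tuple $\bar c'$ in $N$, and the inductive hypothesis applied to the ensuing subgame yields $\tp^\beta_F(\bar c) = \tp^\beta_F(\bar c')$. Symmetry handles the other quantifier, and the limit case (where Odd freely spends $\alpha$ in a legal descent) follows the same pattern.

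The main obstacle is the bookkeeping between the game's two-sided refinement of $V^x$ and $V^y$ and the one-sided tuple hierarchy of $\tp^\alpha$: each of Odd's ordinal drops may be spent on either side, so the inductive argument must permit Eve to respond to either a new $M$-side or a new $N$-side tuple by producing a matching tuple on the opposite side at the decreased level. This is precisely what the tuple-parametrized induction achieves, and the $\Sym(\NN)$-specialization in Remark~\ref{re:GameForSI} is what allows Eve's cumulative permutations to be analyzed round by round, without having to handle the ``product of $h_i$'s'' structure of the general definition. A secondary subtlety is that basic $t_F$-open sets use strict inequalities $\phi < r$ while types encode exact values; this is handled by noting that the countably many rationals $r \in \QQ$ separate possible values of $\phi^M(\bar a)$ for $\phi \in F$, so Eve's obligation to satisfy every basic open set amounts exactly to realizing the correct type.
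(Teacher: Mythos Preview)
Your proposal is correct and takes essentially the same approach as the paper: both arguments proceed by induction on $\alpha$, translating Odd's refinements of $V$ into the naming of tuples and Eve's responses into the back-and-forth matching supplied by equality of $\tp^\alpha$. The only differences are presentational—you make the tuple-parametrized inductive hypothesis explicit from the outset and begin the induction at $\alpha=0$, whereas the paper starts at $\alpha=1$ and introduces the parametrized statement (``$\tp^\beta_M(\bar m)=\tp^\beta_N(\bar m)$ iff Eve has a winning strategy starting with some $g^N_0\in V_m$'') only at the inductive step.
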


\begin{proof}
	For $m \in \NN$, let $\bar{m}$ denote the tuple $(0, \ldots m-1)$. For $\alpha=1$, suppose that $\Th^1(M)=\Th^1(N)$ for some $M,N \in \Mod(L)$, i.e., $M$ and $N$ realize the same types. Then Eve has a winning strategy along the following lines. Without loss of generality, we can assume that in the first turn Odd chooses  $[\phi(\bar{m},\bar{a})]$ as $U^M_0$, where $\bar{m}$ and $\bar{a}$ are disjoint, and the pointwise stabilizer $V_m$ of $\bar{m}$ as $V^M_1$. Suppose that $\tp_M(\bar{m})=\tp_N(\bar{m})$. Eve fixes $\bar{c} \in \Seq$ witnessing that $N \models \exists \bar{x} \phi(\bar{n},\bar{x})$, and chooses $g_0^N \in V_m$ mapping $\bar{c}$ to $\bar{a}$. Clearly, $g_0.N \in [\phi(\bar{m},\bar{a})]$. Otherwise, since $\Th^1(M)=\Th^1(N)$, there is $\bar{b}$ such that $\tp_M(\bar{m})=\tp_N(\bar{b})$, so, arguing as before, Eve can find $g_0^N \in \Sym(\NN)$ such that $g_0.N \in [\phi(\bar{m},\bar{a})]$. Other turns are analogous. In particular, for $n>1$, the elements $g_n^M$ or $g_n^N$ can be always chosen from $V_m$.
	
	On the other hand, suppose that $\Th^1(M) \neq \Th^1(N)$, say, there is $m$ such that no tuple in $N$ realizes $\tp_M(\bar{m})$. Let Odd choose $V_m$ in the first turn, and let $g_0^N$ be the element chosen by Eve. Then, for $\bar{b}=(g_0^N)^{-1}(\bar{m})$, there exists $\phi(\bar{x}) \in \tp_M(\bar{m}) \bigtriangleup \tp_N(\bar{b})$. It is not hard to see that without loss of generality, we can assume that  $\phi(\bar{x})$ is of the form $\exists \bar{y} \psi(\bar{x},\bar{y})$ or $\neg \exists \bar{y} \psi(\bar{x},\bar{y})$. Thus, there exists $\bar{a}$ such that $\psi(\bar{m},\bar{a})$ holds in one of the structures, while for no $\bar{c}$, $\psi(\bar{b},\bar{c})$ holds in the other one. In other words, either $M \in [\psi(\bar{m},\bar{a})]$, while there is no $g \in V_m$ such that $gg_0^N.N \in [\psi(\bar{m},\bar{a})]$, or $N \in [\psi(\bar{m},\bar{a})]$, while there is no $g \in V_m$ such that $g.M \in [\psi(\bar{m},\bar{a})]$. In any case, by Remark \ref{re:GameForSI}, Eve looses the game.
	
	For the inductive step, we assume first that, for every $\beta<\alpha$ and $m$, $\tp^\beta_M(\bar{m})=\tp^\beta_N(\bar{m})$ iff Eve has a winning strategy starting with some $g^N_0 \in V_m$. Then we proceed as above.   
\end{proof}

\begin{corollary}
	\label{co:Iso=GameIso}
	Let $F$ be a fragment in signature $L$. Suppose that $[M] \in \bPi^0_{1+\alpha}(t_F)$,  $\alpha \geq 1$, for some $M \in \Mod(L)$. Then $[M]=[M]_{\alpha,F}$.
\end{corollary}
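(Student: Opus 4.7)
The plan is that this corollary follows immediately by combining the two characterizations already at hand. Theorem \ref{th:isoTheoryalpha} tells us that whenever $[M] \in \bPi^0_{1+\alpha}(t_F)$ with $\alpha \geq 1$, the isomorphism class $[M]$ coincides with $\{N \in \Mod(L) : \Th^{\alpha}_F(N) = \Th^{\alpha}_F(M)\}$. The preceding proposition in this section establishes the same description for the game-equivalence class $[M]_{\alpha,F}$, namely that $[M]_{\alpha,F}$ is exactly $\{N \in \Mod(L) : \Th^{\alpha}_F(N) = \Th^{\alpha}_F(M)\}$. Intersecting these two equalities yields $[M] = [M]_{\alpha,F}$, which is the desired conclusion.

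Conceptually, one can split the statement into the two inclusions. The inclusion $[M] \subseteq [M]_{\alpha,F}$ is automatic: by the earlier proposition in this section, $x \in [y]$ implies $x \sim_\alpha y$ in any Polish $G$-space, applied here to the logic action of $\Sym(\NN)$ on $(\Mod(L), t_F)$. The nontrivial direction is $[M]_{\alpha,F} \subseteq [M]$, and this is exactly where the Borel-complexity hypothesis is used: it is needed to invoke Theorem \ref{th:isoTheoryalpha}, which says that at level $1+\alpha$ the family $\Th^\alpha_F$ already determines isomorphism, so any $N$ which is $\sim_{\alpha,F}$-equivalent to $M$ must actually be isomorphic to $M$.

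There is no real obstacle to carry out; the substantive work has been done in proving Theorem \ref{th:isoTheoryalpha} (which is where the AE-family machinery and Corollary \ref{co:IsoAEAlpha} enter) and in the preceding proposition identifying $[M]_{\alpha,F}$ with a $\Th^\alpha_F$-class (which is where the game-theoretic analysis of $\sim_{\alpha,F}$ is carried out). The present corollary is simply the observation that these two descriptions match, so that the hierarchy of approximating games $\Appr_\alpha$ captures the isomorphism relation for Borel orbits of complexity $\bPi^0_{1+\alpha}(t_F)$.
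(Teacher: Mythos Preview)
Your proposal is correct and follows exactly the argument implicit in the paper: the corollary is stated there without proof because it is the immediate combination of Theorem \ref{th:isoTheoryalpha} and the preceding proposition, both of which identify the relevant class with $\{N \in \Mod(L): \Th^{\alpha}_F(N)=\Th^{\alpha}_F(M)\}$.
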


For equivalence relations $E$, $F$ on Polish spaces $X$, $Y$, respectively, an \emph{$(E,F)$-homomorphism} is a mapping $f:X \rightarrow Y$ such that  $$x_1 E x_2 \mbox{ implies } f(x_1) F f(x_2).$$ Analogously to the Hjorth-isomorphism relation, one can show that Baire-measurable homomorphisms preserve the relations $\sim_\alpha$ on a comeager set.

\begin{proposition}
	\label{pr:GraphHomo}
	Let $X$ be a Polish $G$-space, $Y$ a Polish $H$-space, and let $f$ be a Baire-measurable $(E_X,E_Y)$-homomorphism. For every $\alpha<\omega_1$ there exists a $G$-invariant comeager subset $X_0 \subseteq X$ such that $x  \sim_{\alpha} y$ implies $f(x)  \sim_{\alpha} f(y)$ for $x,y \in X_0$.
\end{proposition}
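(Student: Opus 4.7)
I would adapt the proof of the corresponding statement for the Hjorth-isomorphism game $\Iso$ from \cite{LuPa}, noting that the ordinal parameter in $\Appr_\alpha$ simply decreases along the game tree and therefore requires no separate transfinite bookkeeping. First, since $f$ is Baire-measurable, pick a dense $G_\delta$ set $A \subseteq X$ on which $f|_A$ is continuous, and let
\[
X_0 = \{x \in X : \forall^* g \in G,\ g.x \in A\},
\]
that is, the Vaught transform $A^*$. By Kuratowski--Ulam, $X_0$ is comeager, and it is $G$-invariant by the standard properties of Vaught transforms. If further closure under iterated transforms is needed for the argument below, replace $X_0$ by a countable intersection of such sets; it is still comeager and $G$-invariant.

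Next, fix $x, y \in X_0$ together with a winning strategy $\sigma$ for Eve in $\Appr_\alpha(x, y)$. I plan to describe a winning strategy $\tau$ for Eve in $\Appr_\alpha(f(x), f(y))$ by running both games in parallel. At each turn of the $Y$-game, Odd plays an ordinal $\alpha_n$, an open $H$-neighborhood $V_n$ of $1$, and an open set $U_n \subseteq Y$ around the current $Y$-point $z_n \in \{f(x_n), f(y_n)\}$. Eve lifts this to a move for Odd in the $X$-game by carrying over the same ordinal $\alpha_n$, leaving the $G$-neighborhood equal to $G$, and picking an open $U'_n \subseteq X$ around the current $X$-point $w_n \in \{x_n, y_n\}$ so that $f(U'_n \cap A) \subseteq U_n$. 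Such a $U'_n$ exists by continuity of $f|_A$, once $w_n$ is arranged to lie in $A$ via the Vaught-transform setup. She then feeds these into $\sigma$ to obtain $g \in G$ with $g.w_n \in U'_n$; since $f$ is an $(E_X, E_Y)$-homomorphism, $f(g.w_n)$ lies in the $H$-orbit of $z_n$, so there exists $h \in H$ with $h.z_n = f(g.w_n) \in \overline{U_n}$.

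The delicate point, and the main obstacle, is forcing $h$ to lie in the prescribed $V_n$. Here, as in \cite{LuPa}, the fix is a category argument: the set of those $g' \in G$ for which the $\sigma$-compliant response yields an $h$ in $V_n$ is comeager in an appropriate open piece of $G$, by iterated Kuratowski--Ulam applied to the Vaught transforms and Baire-measurability of the orbit map. Replacing $\sigma$'s $g$ by a generic element from this comeager set, and shrinking $U'_n$ if necessary to keep the two-game simulation consistent, gives Eve a legal move $h$ in $\Appr_\alpha(f(x), f(y))$ meeting all winning conditions. The ordinal bookkeeping is automatic: whenever Odd plays some $\alpha' < \alpha$, Eve continues the simulation at the reduced ordinal $\alpha'$, so no induction on $\alpha$ is needed outside of the game tree itself. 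The $G$-invariance of the exceptional set comes from the original $G$-invariance of $X_0$, since the whole construction depends only on the $G$-orbits of $x$ and $y$ and on their intersections with $A$.
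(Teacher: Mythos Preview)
Your proposal is correct and takes essentially the same approach as the paper: both reduce to Proposition~3.6 of \cite{LuPa} for $\Iso$ and add only the observation that the ordinal parameter in $\Appr_\alpha$ transfers through the simulation without any separate transfinite argument. One small wrinkle in your write-up: you say Eve ``carries over the same ordinal $\alpha_n$'' to the $X$-game while ``leaving the $G$-neighborhood equal to $G$,'' but by the rules of $\Appr_\alpha$ the ordinal drops only when a new neighborhood is played, so with $V^x\equiv G$ the $X$-game ordinal simply stays at $\alpha$ throughout---which is fine and is exactly the paper's point in slightly different form (the paper phrases it as: no new $G$-neighborhood is played in the simulated game unless a new $H$-neighborhood was played in the target game).
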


\begin{proof}
	As it has been pointed out in Remark \ref{re:GameForSI},  each $\Appr_\alpha(x,y)$ is the game $\Iso(x,y)$ with the extra ingredient of selecting (smaller and smaller) ordinals whenever a new neighborhood of the identity is played by Odd. Therefore the proof of the proposition is essentially the same as the proof of Proposition 3.6 in \cite{LuPa}, which states the same fact for $\Iso(x,y)$. One only needs to make the following straightforward observation when constructing a winning strategy for Eve in $\Appr_\alpha(f(x),f(y))$ based on her winning strategy in $\Appr_\alpha(x,y)$: as long as no new neighborhood of $1$ has been played by Odd in $\Appr_\alpha(f(x),f(y))$, no new neighborhood of $1$ is played by Odd in $\Appr_\alpha(x,y)$.
\end{proof}

\begin{theorem}
	\label{th:NotClassifiablePialpha}
	Let $X$ be a Polish $G$-space, and let $\alpha<\omega_1$. If for any $G$-invariant comeager subset $C$ of $X$ there exist $x, y \in C$ such that $x \sim_{\alpha} y$ but $[x] \neq [y]$, then there is no Borel reduction of a restriction of $E_X$ to a comeager $X_0 \subseteq X$ to a potentially $\bPi^0_{1+\alpha}$ isomorphism relation of the form $\cong_T$ for some fragment $F$ and $F$-theory $T$.
\end{theorem}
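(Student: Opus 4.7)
The plan is to argue by contradiction, combining Corollary \ref{co:Sigma0alpha} (to fix a fragment $F$ in which the isomorphism classes have low complexity), Proposition \ref{pr:GraphHomo} (to transport $\sim_\alpha$ across a Borel homomorphism), and Corollary \ref{co:Iso=GameIso} (to identify $\sim_{\alpha,F}$ with $\cong_T$ on the relevant Borel level).

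Suppose, toward contradiction, that $X_0\subseteq X$ is comeager and $f\colon X_0\to \Mod(T)$ is a Borel reduction of $E_X\uhr X_0$ to $\cong_T$, where $\cong_T$ is potentially $\bPi^0_{1+\alpha}$ for some fragment $F_0$ and $F_0$-theory $T$. First, I would invoke Corollary \ref{co:Sigma0alpha} to obtain a fragment $F\supseteq F_0$ such that $[M]\in \bPi^0_{1+\alpha}(t_F)$ for every $M\in\Mod(T)$; this turns $(\Mod(T),t_F)$ into the Polish $\Sym(\NN)$-space on which $\sim_{\alpha,F}$ is defined.

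Next, I would pass from $X_0$ to a well-behaved $G$-invariant comeager set. Since any comeager subset of a Polish $G$-space contains a $G$-invariant comeager one, refine $X_0$ to a $G$-invariant comeager $X_1\subseteq X_0$, and extend $f$ arbitrarily to a Borel $\tilde f\colon X\to\Mod(T)$. Then $\tilde f$ is a Baire-measurable $(E_X,\cong_T)$-homomorphism on the comeager $X_1$, so Proposition \ref{pr:GraphHomo} provides a $G$-invariant comeager $X_2\subseteq X$ with $x\sim_\alpha y\Rightarrow \tilde f(x)\sim_{\alpha,F}\tilde f(y)$ for $x,y\in X_2$. Set $C=X_1\cap X_2$, a $G$-invariant comeager subset of $X_0$ on which $f$ simultaneously has the reduction property and pushes $\sim_\alpha$ to $\sim_{\alpha,F}$.

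Now apply the hypothesis of the theorem to $C$: there exist $x,y\in C$ with $x\sim_\alpha y$ but $[x]\neq [y]$. Then $f(x)\sim_{\alpha,F}f(y)$, and since $[f(y)]\in\bPi^0_{1+\alpha}(t_F)$, Corollary \ref{co:Iso=GameIso} yields $[f(y)]_{\alpha,F}=[f(y)]$, hence $f(x)\cong_T f(y)$. As $f$ is a reduction on $X_0$, this forces $x\,E_X\,y$, i.e.\ $[x]=[y]$, a contradiction.

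There is no substantial obstacle beyond assembling the three ingredients in the right order; the three nontrivial inputs are already in hand. The only mildly delicate point is the bookkeeping needed to reconcile the (possibly non-invariant) domain $X_0$ of $f$ with the $G$-invariant comeager sets demanded by Proposition \ref{pr:GraphHomo} and by the hypothesis, but this is routine invariant descriptive set theory (extend $f$ arbitrarily to $X$ and intersect a finite number of $G$-invariant comeager subsets).
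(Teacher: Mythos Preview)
Your proposal is correct and follows essentially the same route as the paper: contradiction via Corollary \ref{co:Sigma0alpha}, Proposition \ref{pr:GraphHomo}, and Corollary \ref{co:Iso=GameIso}, in that order. If anything, your version is slightly more careful than the paper's in making explicit the $G$-invariance of the comeager set $C$ (needed to invoke the hypothesis) and in justifying why the extension $\tilde f$ remains an $(E_X,\cong_T)$-homomorphism on all of $X$ (which works precisely because $X_1$ is $G$-invariant).
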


\begin{proof}
Suppose that there is an $F$-theory $T$, comeager $X_0 \subseteq X$, and a Borel reduction $f$ of the restriction of $E_{X}$ to $X_0$, to $\cong_T$  such that $\cong_T$ is potentially $\bPi^0_{1+\alpha}$. By Corollary \ref{co:Sigma0alpha}, we can assume that $M \in \bPi^0_{1+\alpha}(t_F)$ for $M \in \Mod(T)$. By Proposition \ref{pr:GraphHomo}, there is a comeager $C \subseteq X_0$ such that $x \sim_\alpha y$ implies $f(x) \sim_{\alpha,F} f(y)$ for $x,y \in C$. Fix $x,y \in C$ such that $x \sim_{\alpha} y$ but $[x] \neq [y]$. But then $f(x) \sim_{\alpha, F} f(y)$, and, by Corollary \ref{co:Iso=GameIso}, $f(x) \cong f(y)$, a contradiction. 
\end{proof}

\begin{theorem}
	\label{th:NotClassifiable}
	Let $X$ be a Polish $G$-space. If for any $\alpha<\omega_1$, and any $G$-invariant comeager subset $C$ of $X$ there exist
	$x, y \in C$ such that $x \sim_{\alpha} y$ but $[x] \neq [y]$, then no restriction of $E_X$ to a comeager $X_0 \subseteq X$ is classifiable by countable structures.
\end{theorem}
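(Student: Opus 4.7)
The plan is to reduce to Theorem~\ref{th:NotClassifiablePialpha} by identifying a uniform potential Borel complexity for the orbits $[f(x)]$. Suppose, toward a contradiction, that some comeager $X_0 \subseteq X$ admits a Borel reduction $f : X_0 \to \Mod(L')$ of $E_X \uhr X_0$ to isomorphism on countable structures in some relational signature $L'$; passing to a Borel $G$-invariant comeager subset, I may take $X_0$ to be $G$-invariant and Borel. By the continuous Lopez-Escobar theorem (invoked via \cite[Theorem 6.3]{BeDoNiTs}), each isomorphism class $[M] \subseteq \Mod(L')$ is Borel, so there exist a countable ordinal $\alpha_M$ and a countable fragment $F_M \subseteq \IL(L')$ with $[M] \in \bPi^0_{1+\alpha_M}(t_{F_M})$. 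As in the proof of Theorem~\ref{th:Potentially}, the assignment $M \mapsto (\alpha_M, F_M)$ can be made Borel and isomorphism-invariant. For each pair $(\alpha, F)$, set
\[
Z_{\alpha, F} = f^{-1}\bigl(\{M \in \Mod(L') : [M] \in \bPi^0_{1+\alpha}(t_F)\}\bigr);
\]
then $Z_{\alpha, F}$ is Borel and $E_X$-invariant, and $X_0 = \bigcup_{\alpha, F} Z_{\alpha, F}$.

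If one can locate $(\alpha_0, F_0)$ such that $Z_{\alpha_0, F_0}$ is $G$-invariantly comeager in $X_0$, the proof concludes quickly. Set $T_0 = \Th_{F_0}(f(Z_{\alpha_0, F_0}))$: then $\cong_{T_0}$ is potentially $\bPi^0_{1+\alpha_0}$, witnessed by the topology $t_{F_0}$ on $\Mod(T_0)$. The restriction of $f$ to $Z_{\alpha_0, F_0}$ is therefore a Borel reduction of a comeager restriction of $E_X$ into $\cong_{T_0}$. Since the hypothesis of Theorem~\ref{th:NotClassifiablePialpha} at $\alpha = \alpha_0$ is a direct instance of the hypothesis of the present theorem, that theorem excludes such a reduction and yields the contradiction.

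The main obstacle is producing such $(\alpha_0, F_0)$. Failing such a choice, for each $(\alpha, F)$ Proposition~\ref{pr:GraphHomo} combined with our hypothesis applied to the associated $G$-invariant comeager set would yield witnesses $x, y$ in $X_0$ with $f(x) \sim_{\alpha, F} f(y)$ and $f(x) \not\cong f(y)$, and Corollary~\ref{co:Iso=GameIso} would then force $[f(x)] \notin \bPi^0_{1+\alpha}(t_F)$; consequently, the Borel assignment $x \mapsto (\alpha_{f(x)}, F_{f(x)})$ would fail to take bounded values on any $G$-invariant comeager subset of $X_0$. Turning this cofinality phenomenon into a contradiction via a reflection or $\bSigma^1_1$-boundedness argument tailored to the Borel reduction $f$ into $\Mod(L')$, crucially exploiting the availability of witnesses in \emph{every} $G$-invariant comeager set rather than in one fixed such set, is the delicate heart of the argument.
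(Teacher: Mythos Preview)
Your proposal is not a proof: the final paragraph openly concedes that the ``delicate heart of the argument'' --- obtaining a single pair $(\alpha_0,F_0)$ whose $Z_{\alpha_0,F_0}$ is comeager, or else deriving a contradiction from unboundedness --- has not been carried out. Gesturing at ``a reflection or $\bSigma^1_1$-boundedness argument'' is exactly the missing step, not a substitute for it. Moreover, the logic in that paragraph is tangled: if $Z_{\alpha,F}$ fails to be comeager, the hypothesis hands you witnesses $x,y$ in some $G$-invariant comeager set, but nothing places those witnesses inside $Z_{\alpha,F}$, so you cannot conclude $[f(x)]\notin\bPi^0_{1+\alpha}(t_F)$ from $f(x)\sim_{\alpha,F}f(y)$ and $f(x)\not\cong f(y)$ via Corollary~\ref{co:Iso=GameIso}. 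Even the middle paragraph has a gap: setting $T_0=\Th_{F_0}(f(Z_{\alpha_0,F_0}))$ does not guarantee that every model of $T_0$ has its isomorphism class in $\bPi^0_{1+\alpha_0}(t_{F_0})$, so you have not shown $\cong_{T_0}$ is potentially $\bPi^0_{1+\alpha_0}$, which is what Theorem~\ref{th:NotClassifiablePialpha} requires.

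The paper avoids all of this by a much simpler route. There is no need to vary the fragment: since the targets are classical countable structures, one works throughout with the finitary fragment $\FL$, and every Borel isomorphism class is already $\bPi^0_\beta(t_{\FL})$ for some $\beta$. The genuine boundedness input is then a single citation --- Claim~5.4 in \cite{AlPa} --- which produces a comeager $C'\subseteq X_0$ and one ordinal $\alpha$ with $[f(x)]\in\bPi^0_\alpha(t_{\FL})$ for all $x\in C'$. From there one applies Proposition~\ref{pr:GraphHomo} and Corollary~\ref{co:Iso=GameIso} directly (not via Theorem~\ref{th:NotClassifiablePialpha}): pick $x,y\in C$ with $x\sim_\alpha y$ and $[x]\neq[y]$, get $f(x)\sim_{\alpha,\FL}f(y)$, and conclude $f(x)\cong f(y)$, contradicting that $f$ is a reduction. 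Your instinct that a boundedness principle is needed was right; the point is that it is already available off the shelf and applies once you stop varying the fragment.
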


\begin{proof}
Suppose that there is a comeager $X_0 \subseteq X$, and a Borel reduction $f$ of a restriction of $E_X$ to $X_0$, to $\cong$ on $\Mod(L)$ for some signature $L$. By Claim 5.4 in the proof of \cite[Theorem 1.3]{AlPa}, there is a comeager $C'\subseteq X_0$ and $\alpha<\omega_1$ such that for every $x \in C'$, $[f(x)]$ is $\bPi^0_\alpha$ in the standard topology on $\Mod(L)$, and so also in the topology generated by the finitary fragment $\FL$. By Proposition \ref{pr:GraphHomo}, there is a comeager $C \subseteq C'$ such that $x \sim_\alpha y$ implies $f(x) \sim_{\alpha,\FL} f(y)$ for $x,y \in C$. Fix $x,y \in C$ such that $x \sim_{\alpha} y$ but $[x] \neq [y]$. But then $f(x) \sim_{\alpha,\FL} f(y)$, and, by Corollary \ref{co:Iso=GameIso}, $f(x) \cong f(y)$, a contradiction.
\end{proof}


\begin{thebibliography}{99}
\bibitem{AlPa} S. Allison, A. Panagiotopoulos, \emph{Dynamical obstructions to classification by (co)homology and other TSI-group invariants}, Trans. Amer. Math. Soc. 374 (2021), 8793--8811.
\bibitem{BeDoNiTs} I. Ben Yaacov, M. Doucha, A. Nies, T. Tsankov, \emph{Metric Scott analysis}, Adv. Math. 318 (2017), 46--87.
\bibitem{BeIo} I. Ben Yaacov, J. Iovino, \emph{Model theoretic forcing in analysis}, Ann. Pure Appl. Logic 158 (2009), 163--174.
\bibitem{GaKe} S. Gao, A. Kechris, \emph{On the classification of Polish metric spaces up to isometry}. Memoirs of the American Mathematical Society, 161 (2003).
\bibitem{Gao} S. Gao, \emph{Invariant Descriptive Set Theory}, Chapman and Hall 2008.
\bibitem{HaMaTs} A. Hallbäck, M. Malicki, T. Tsankov, \emph{Continuous logic and Borel equivalence relations}, J. Symbolic Logic, published online 22.06.2022, DOI:10.1017/jsl.2022.48
\bibitem{Hj} G. Hjorth, \emph{Classification and orbit equivalence relations}, Mathematical Surveys and Monographs, vol. 75, American Mathematical Society.
\bibitem{HjKe} G. Hjorth, A. Kechris, \emph{Borel equivalence relations and classifications of countable models}, Annals Pure Applied Logic 82 (1996), 221--272.
\bibitem{HjKeLo} G. Hjorth, A. Kechris, A. Louveau, \emph{Borel equivalence relations induced by actions of the symmetric group}, Annals Pure Applied Logic 92 (1998) 63--112.
\bibitem{KeMaPaZi} A. Kechris, M. Malicki, A. Panagiotopoulos, J. Zielinski, On Polish groups admitting non-essentially countable actions. Ergod. Theory Dyn. Syst., 42 (2022), 180--194.
\bibitem{LuPa} M. Lupini, A. Panagiotopoulos, \emph{Games orbits play and obstructions to Borel reducibility}, Groups Geom. Dyn. 12 (2018), 1461--1483.
	\end{thebibliography}
\end{document}